\newtheorem{lem}{Lemma}[section]
\newtheorem{thm}[lem]{Theorem}
\newtheorem{prop}[lem]{Proposition}
\theoremstyle{definition}
\newtheorem{defn}[lem]{Definition}
\newtheorem{example}[lem]{Example}
\newtheorem{rem}[lem]{Remark}
\newcommand{\CC}{{\mathbb C}}
\newcommand{\FF}{{\mathbb F}}
\newcommand{\QQ}{{\mathbb Q}}
\newcommand{\RR}{{\mathbb R}}
\newcommand{\TT}{{\mathbb T}}
\newcommand{\ZZ}{{\mathbb Z}}
\newcommand{\Qp}{\QQ_p}
\newcommand{\Zp}{\ZZ_p}
\newcommand{\Ccal}{\mathcal{C}}
\newcommand{\Ocal}{\mathcal{O}}
\DeclareMathOperator{\charact}{char}
\DeclareMathOperator{\ord}{ord}
\def\benm{\begin{enumerate}}            % Begin enumerate command
\def\eenm{\end{enumerate}}              % End enumerate command
\newcommand{\norm}[1]{\left\Vert #1\right\Vert}         % Norm command (1 argument)
\title{ Frames of translates for number-theoretic groups  }
\date{\today}
\subjclass[2010]{43A25}
\keywords{Compact open subgroups, frames of translates}
\author{John J. Benedetto}
\address{Norbert Wiener Center\\
         Department of Mathematics \\
         University of Maryland \\
         College Park, MD 20742 \\
         USA}
\email{jjb@umd.edu}
\urladdr{http://www.math.umd.edu/\textasciitilde jjb}
\author{Robert L. Benedetto}
\address{Department of Mathematics and Statistics \\
Amherst College          \\
Amherst, MA        \\
        USA}
 \email{rlbenedetto@amherst.edu}
 \urladdr{https://rlbenedetto.people.amherst.edu/}
\thanks{The first-named author gratefully acknowledges the support
of ARO grant W911NF--17--1--0014
and NSF-ATD grant DMS-1738003 The second named author
gratefully acknowledges the support of NSF grant DMS-1501766.
The authors appreciate helpful comments by Carlos Cabrelli, Karlheinz Gr{\"o}chenig,
Eugenio Hern{\'a}ndez, and Victoria Paternostro. Finally, the authors are grateful for 
the constructive and thorough reviews by the two anonymous
referees. All of their suggestions have been incorporated into this final version.}
\begin{document}

\newcounter{bean}

\begin{abstract}
Frames of translates of $f \in L^2(G)$ are characterized in terms of the zero-set
of the so-called spectral symbol of $f$ in the setting of a locally compact abelian group
$G$ having a compact open subgroup $H$. We refer to such a $G$ as a number theoretic
group. This characterization was first proved in 1992 by Shidong Li and one of the authors
for $L^2({\mathbb R}^d)$ with the same {\it formal} statement of the characterization.
For number theoretic groups, and these include local fields, the strategy of proof is necessarily
entirely different; and it requires a new notion of translation that reduces to the usual definition in 
${\mathbb R}^d$.

\end{abstract}

\maketitle

%%%%%%%%%%%%%%%%%%%%%%%%%%%%%%%%%%%%%
%%%%%%%%%%%%%%%%%%%%%%%%%%%%%%%%%%%%%

\section{Introduction}

%%%%%%%%%%%%%%%%%%%%%%%%%
\subsection{Background}
\label{sec:background}

Time frequency analysis, wavelet theory, the theory of frames, sampling theory, and
shift invariant spaces have not only burgeoned, but have also uncovered a host
specific, tantalizing problem areas. One of these is the 
frame theoretic characterization of a closed span of translations. This is our bailiwick here, and
it has become a topic with great generalization, applicability, intricacy, and abstraction,
and with  a large number of contributors, see, e.g.,  \cite{BenLi-1993}, \cite{chri2016}, \cite{GroStr2007}, 
\cite{GolTou2008}, \cite{CabPat2010}, 
\cite{BowRos2015}, \cite{BarHerPat2015}, \cite{BarHerPat2016}, \cite{matu2018} and the
references therein.

We shall focus on the setting of what we call {\it number-theoretic 
LCAGs}, and by which we mean
locally compact abelian groups (LCAGs) $G$ with a compact open subgroup $H$.
For a given function $f$ on $G$, we shall solve the particular problem in this setting
of characterizing when the closed span of translates of $f$ is a frame. The characterization
is in terms of the zero-set of a natural spectral symbol,
see Theorem \ref{thm:char} for the solution. This {\it closed span of translates problem} is also addressed
in the aforementioned references but not for number theoretic groups. The Euclidean version,
going back to 1992, is restated in Theorem \ref{thm:benli}. The strategy for its
proof is natural, whereas the proof of Theorem \ref{thm:char} requires a new idea that we explain.

In Subsection \ref{sec:frames} we provide the necessary material on the theory of frames.
Then, in Subsection \ref{sec:Rdtranslates}, we state the Euclidean version of what we shall prove 
for number theoretic groups. Section \ref{sec:lcag} gives the theoretical background
for locally compact abelian groups and number theoretic groups that we need to prove
Theorem \ref{thm:char}.

In order to formulate the closed span of translates problem for number theoretic groups,
we require a new, motivated, and reasonable notion of translation. This is the content of
Subsection \ref{sec:trans}. Then, in Subsection \ref{sec:spanspecsym}, we define the
spectral symbol for number theoretic groups, in analogy with the Euclidean case; and
prove a basic property of it with regard to $H^{\perp}$. Subsection \ref{sec:lemmas} gives the 
technical lemmas
we need to prove our main Theorem \ref{thm:char} in Subsection \ref{sec:characterization}.
The idea of the proof of Theorem \ref{thm:char}, that distinguishes it essentially and
theoretically from the Euclidean case
in Subsection \ref{sec:Rdtranslates}, is encapsulated at the beginning of 
Subsection \ref{sec:characterization}.

Section \ref{sec:ex} is devoted to fundamental examples that are essential to our point of view.

%%%%%%%%%%%%%%%%%%%%%%%%%

\subsection{Frames}
\label{sec:frames}

\begin{defn}[Frame]
\label{def:frame}
{\it a.} Let $H$ be a separable Hilbert space over the
field $\FF,$ where $\FF = \RR$ or $\FF = \CC.$ A finite or countably infinite sequence, 
$X = \{x_j\}_{j \in J},$ of elements of $H$ 
is a {\it frame} for $H$ if 
\begin{equation}
\label{eq:framedef}
      \exists A, B > 0  \; \text{such that}  \;
      \forall x \in H, \quad A\norm{x}^2 \leq \sum_{j \in J} |\langle {x},{x_j}\rangle_H|^2 \leq B\norm{x}^2.
\end{equation}
The optimal constants, viz., the supremum over all such $A$ and infimum over all such $B$, are 
called the {\it lower} and {\it upper frame bounds}, respectively. When we refer to {\it frame bounds} 
$A$ and $B$, we shall mean these optimal constants.

{\it b.} A frame $X$ for $H$ is a {\it tight frame} if $A = B.$ If a tight frame has the further property 
that $A = B = 1,$ then the frame is a {\it Parseval frame} for $H.$  

{\it c.}  A frame $X$ for $H$ is {\it equal-norm} 
if each of the elements of $X$ has the same norm. Further, a frame $X$ for $H$ is a {\it unit norm tight frame} 
(UNTF) if each of the elements 
of $X$ has norm $1.$  If $H$ is finite dimensional and $X$ is an UNTF for $H,$ then $X$ is a 
{\it finite unit norm tight frame} ({\it FUNTF}).

{\it d.} A sequence of elements of $H$ satisfying an upper frame bound,
such as $B\norm{x}^2$ in (\ref{eq:framedef}), is a {\it Bessel sequence};
and this second inequality of (\ref{eq:framedef}) is {\it Bessel's inequality}.
\end{defn}

We refer to \cite{daub1992}, \cite{bene1994}, \cite{chri2016} for the theory of frames.

%%%%%%%%%%%%%%%%%%%%%%%%%%%

\subsection{Frames of translates for ${\mathbb R}^d$}
\label{sec:Rdtranslates}
$\RR^d$ denotes $d$-dimensional Euclidean space.
Let $f \in L^2({\mathbb R}^d)$, the space of $\CC$-valued square integrable functions taken with Lebesgue 
measure. The {\it Fourier transform} $\widehat{f}$ of  $f \in L^2({\mathbb R}^d)$ is formally defined as
\[
      \widehat{f}(\gamma) = \int_{\RR^d}\,f(x)\,e^{2\pi ix\cdot{\gamma}}\,dx, \quad  \gamma\in \widehat{\RR}^d,
\]
where $ \widehat{\RR}^d= \RR^d$ is considered as the dual locally compact abelian group (LCAG) of
the LCAG $\RR^d$.
Further, formally define the {\it spectral symbol},
\[   
       \Phi(f)(\gamma) = \sum_{m\in \ZZ^d} |\widehat{f}(\gamma + m)|^2, \quad \gamma \in [0,1)^d.
\]
For any fixed $y\in \RR^d$, the translation operator, $\tau_y : L^2({\mathbb R}^d) \longrightarrow L^2({\mathbb R}^d)$, is defined by
$ \tau_{y}(f)(x) = f(x-y)$. For a given $f \in \RR^d$, we consider the space,
\[
           V_f = \overline{\rm span}\{\tau_m f : m\in \ZZ^d\} \subseteq L^2({\mathbb R}^d).
\]  
It is clear that $\Phi(f) \in L^1([0,1)^d)$, and, in fact,  $\norm{\Phi(f)}_{L^1([0,1)^d)} = \norm{f}_{L^2({\mathbb R}^d)}^2$.
Further, it is straightforward to check that if $\{\tau_m f\}$ is 
a Bessel sequence,
then
$\Phi(f) \in L^2([0,1)^d)$.

\begin{thm}[J. Benedetto and Shidong Li, 1992 
\cite{BenLi-1993}, \cite{BenWal1994} Section~3.8]
\label{thm:benli}
Let $f \in L^2({\mathbb R}^d)$. Then, $\{\tau_m f : m\in \ZZ^d\}$ is a frame for $V_f$
if and only if
\[
   \exists\, A, B >0 \; {\rm such}\,{\rm that} \; A \leq \Phi(f) \leq B \;  {\rm on}\, [0,1)^d \setminus N,
\]
where $N = \{\gamma \in [0,1)^d : \Phi(f)(\gamma) = 0 \}$ ($N$ is defined up to sets of measure $0$).
\end{thm}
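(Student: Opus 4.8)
The plan is to reduce the statement to a computation on the torus $\TT^d = [0,1)^d$ via the standard Fourier-transform dictionary, following the approach that is "natural" in the Euclidean case (as the authors hint). First I would recall the key periodization identity: for $f, g \in L^2(\RR^d)$ with $\{\tau_m f\}$ Bessel, one has
\[
  \la g, \tau_m f\ra_{L^2(\RR^d)} = \int_{\TT^d} \Big(\sum_{k\in\ZZ^d} \widehat{g}(\gamma+k)\overline{\widehat{f}(\gamma+k)}\Big) e^{2\pi i m\cdot\gamma}\, d\gamma,
\]
so that the $m$-th frame coefficient of $g$ against $\{\tau_m f\}$ is exactly the $m$-th Fourier coefficient of the $1$-periodic function $B_{g,f}(\gamma) := \sum_k \widehat g(\gamma+k)\overline{\widehat f(\gamma+k)}$. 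In particular, taking $g = f$ gives $B_{f,f} = \Phi(f)$. By Parseval on $L^2(\TT^d)$, the frame inequality $A\norm{g}^2 \le \sum_m |\la g,\tau_m f\ra|^2 \le B\norm{g}^2$ for all $g \in V_f$ is therefore equivalent to
\[
  A\norm{g}^2 \;\le\; \norm{B_{g,f}}_{L^2(\TT^d)}^2 \;\le\; B\norm{g}^2 \qquad \text{for all } g \in V_f.
\]

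Next I would identify $V_f$ concretely on the Fourier side. A function $g$ lies in $\overline{\operatorname{span}}\{\tau_m f\}$ iff $\widehat g = \varphi \cdot \widehat f$ for some $1$-periodic $\varphi$ (a "multiplier"), with the norm computed as $\norm{g}^2 = \int_{\TT^d} |\varphi(\gamma)|^2 \Phi(f)(\gamma)\, d\gamma$; and for such $g$ one computes $B_{g,f}(\gamma) = \varphi(\gamma)\Phi(f)(\gamma)$. So the frame condition becomes: there exist $A,B>0$ with
\[
  A\int_{\TT^d}|\varphi|^2\Phi(f) \;\le\; \int_{\TT^d}|\varphi|^2\Phi(f)^2 \;\le\; B\int_{\TT^d}|\varphi|^2\Phi(f)
\]
for all admissible multipliers $\varphi$. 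On the set $N$ where $\Phi(f) = 0$ all three integrands vanish, so only the behavior of $\Phi(f)$ on $\TT^d\setminus N$ matters, and the claim is precisely that this double inequality holds for all $\varphi$ if and only if $A \le \Phi(f) \le B$ a.e. on $\TT^d \setminus N$.

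For the final equivalence, the direction ($A \le \Phi(f)\le B$ on $\TT^d\setminus N$ $\Rightarrow$ frame) is immediate by pointwise comparison inside the integrals. For the converse I would argue by contradiction: if $\Phi(f)$ exceeds $B$ on a positive-measure set $S \subseteq \TT^d\setminus N$, take $\varphi = \mathbb{1}_S$ and observe $\int_S \Phi(f)^2 > B\int_S \Phi(f)$, contradicting the upper bound; the lower bound is handled symmetrically using a set where $0 < \Phi(f) < A$. The only genuinely delicate points are (i) verifying that the multiplier description of $V_f$ is exactly right — i.e., that the closure really is all of $\{\varphi\widehat f : \varphi \text{ periodic}, \varphi\widehat f \in L^2\}$ and not something smaller — which is where the Bessel hypothesis and a density argument with trigonometric polynomials $\varphi$ enter; and (ii) handling measurability/integrability when $\Phi(f)$ is merely $L^1$ rather than $L^2$, so that the characteristic-function test multipliers are legitimately admissible. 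I expect step (i), pinning down $V_f$ on the Fourier side, to be the main obstacle, since everything after it is a routine Cauchy–Schwarz-style comparison of weighted integrals. (That said, the authors flag that for \emph{number-theoretic} groups this natural strategy breaks down and a new idea is needed; here, for $\RR^d$, the strategy goes through as sketched.)
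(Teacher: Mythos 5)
Your overall architecture is the same as the one the paper uses for its group-theoretic analogue (Theorem \ref{thm:char}), and the same as the original Benedetto--Li argument: convert frame coefficients into Fourier coefficients of the bracket $B_{g,f}$, reduce the frame inequalities to the weighted-integral double inequality $A\int|\varphi|^2\Phi(f)\le\int|\varphi|^2\Phi(f)^2\le B\int|\varphi|^2\Phi(f)$ (this is exactly the content of Lemmas \ref{lem:53}, \ref{lem:54}, \ref{lem:55} in the $H^{\perp}$ setting), and then test against indicator functions of sets where $\Phi(f)$ violates the bounds. The computations you record ($\norm{g}^2=\int|\varphi|^2\Phi(f)$, $B_{g,f}=\varphi\,\Phi(f)$, the $L^1$-versus-$L^2$ issue for $\Phi(f)$ resolved by the Bessel hypothesis) all match the paper's Proposition \ref{prop:bessel} and Lemmas \ref{lem:53}--\ref{lem:54}.

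The one place you genuinely diverge is your step (i): you route the converse through the full multiplier characterization of $V_f$ as $\{\varphi\widehat{f}:\varphi \text{ periodic},\ \varphi\widehat{f}\in L^2\}$, so that $({\mathbb 1}_S\widehat{f})^{\vee}$ is an admissible test vector. That characterization is true (it is the de Boor--DeVore--Ron/Helson description of principal shift-invariant spaces), but it is a nontrivial lemma in its own right, you correctly flag it as the main obstacle, and you do not prove it. The paper shows it can be bypassed entirely: one only needs the trivial half of the description, namely that finite linear combinations $g_F=\sum c_{[x]}\tau_{[x]}f$ correspond to trigonometric polynomials $\Theta_F$, together with Lemma \ref{lem:52} (frame inequalities pass from the span to its closure). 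The price is that ${\mathbb 1}_E$ is no longer directly admissible, so in the converse direction the paper approximates ${\mathbb 1}_E$ by a trigonometric polynomial $\Psi$ in $L^2$ and controls the error via the weight $\Phi_{\Ccal}(f)(A-\Phi_{\Ccal}(f))$ and the elementary bound $A\Phi-\Phi^2\le A^2/4$ on $E$ (parts \textit{iii}--\textit{iv} of the proof of Theorem \ref{thm:char}). So: your version buys a cleaner endgame (indicators plug in directly) at the cost of a harder unproven structural lemma about $V_f$; the paper's version needs only density of trigonometric polynomials plus a short quantitative perturbation estimate. To make your write-up complete you should either prove the multiplier characterization or replace step (i) by the trigonometric-polynomial approximation just described.
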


\begin{rem}[Generalizations]
\label{rem:idea}
The natural generalization of this result to LCAGs $G$, and then some, has been done, e.g.,\cite{CabPat2010},
\cite{GolTou2008}, \cite{HerSikWeiWil2010}, cf. \cite{stro1999}.  Structurally, the generalizations 
typically depend on $\widehat{G}$ having
non-trivial  discrete subgroups that replace $\ZZ^d$, and on thinking 
naturally of $[0,1)^d \subseteq \widehat{\RR}^d$ as a set of 
coset representatives of  $\widehat{\RR}^d/\TT^d$, see Subsection \ref{sec:basictheory}.
This setting does not take into account LCAGs with compact open subgroups, and these include 
many groups that arise in number theory. It is
this setting that we analyze. 
\end{rem}

%%%%%%%%%%%%%%%%%%%%%
%%%%%%%%%%%%%%%%%%%%%%

\section{LCAGs}
\label{sec:lcag}
%%%%%%%%%%%%%%%%%%%%%%%%%%%%%%%
\subsection{Basic theory}
\label{sec:basictheory}
Let $G$ be a locally compact abelian group (LCAG) with closed subgroup $H$, dual LCAG
$\widehat{G}$, and annihilator subgroup $H^{\perp} \subseteq \widehat{G}$. As a group,
$\widehat{G}$ is defined as the set of all continuous homomophisms, $\gamma : G \longrightarrow
\{ z \in \CC : |z| = 1 \}$, for which $\gamma (x+y) =\gamma(x)\,\gamma(y)$, and where the 
group operation 
on $\widehat{G}$ is defined by $(\gamma_1 + \gamma_2)(x) = \gamma_1 (x)\,\gamma_2(x)$.
It is standard to write $\gamma(x) = (x, \gamma)$, and the continuous homomorphisms
$\gamma$ are called {\it characters} of $G$. The annihilator $H^{\perp} $ of $H$ is defined as
\[
     H^{\perp} = \{ \gamma \in \widehat{G} : \forall x \in H, \; (x, \gamma) = 1 \},
\]
see
\cite{pont1966}, \cite{rudi1962}, \cite{reit1968}, \cite{HewRos1963}, \cite{HewRos1970} for the
basic theory of harmonic analysis on LCAGs beginning with the natural topology on 
$\widehat{G}$. We shall only deal with abelian groups and subgroups. The following properties are basic.

\begin{itemize}

\item $H^{\perp} \subseteq \widehat{G}$ is a closed subgroup.
\item $\widehat{G/H} = H^{\perp}$.
\item $\widehat{G}/H^{\perp} = \widehat{H}$.
\item $H^{\perp}, \,{\rm resp.,} \, H$, is compact $\Longleftrightarrow H, \, {\rm resp.} \, H^{\perp}$, is open.
\item $H \subseteq G$ is an open subgroup $\Longleftrightarrow G/H$ is a discrete group. This assertion only assumes
that $H \subseteq G$ is a subgroup, not necessarily a closed subgroup.
\end{itemize}

The two equalities in the above list designate algebraic and topological isomorphisms. They are proved by
analyzing the natural surjective homomorphisms $h$. For example, for the first equality, begin by
considering the natural surjective homomorphism, $h : G \longrightarrow G/H.$ Then,
the equation, $(x, \gamma) = (h(x), \lambda)$, defines an injection between the elements of $H^{\perp}$
and the continuous characters $\lambda$ defined on $G/H$. It is then straightforward to complete the proof.

A set of {\it coset representatives} of the quotient group $\widehat{G}/H^{\perp}$ 
is denoted by $\Ccal$. $\Ccal$ is defined as a subset of $\widehat{G}$ consisting
of exactly one element of each coset $\Sigma \in \widehat{G}/H^{\perp}$,
and each element $\gamma + H^{\perp} \in \Sigma$ is designated $[\gamma]$
so that generally there are many $\gamma_1, \gamma_2 \in  \widehat{G}$ for which
$[\gamma_1]= [\gamma_2] = \Sigma$. 

Given a set $\Ccal$ of coset representatives of $\widehat{G}/H^{\perp}$.
Based on the fact that the set of all distinct cosets $\Sigma \subseteq \widehat{G}$
is a tiling of $\widehat{G}$, 
we see that each $\gamma \in \widehat{G}$ has a 
{\it unique} representation $\gamma = \sigma + {\eta}_\gamma$, where
$\sigma \in  \Ccal$ and ${\eta}_{\gamma} \in H^{\perp}$.  The {\it cross-section}
mapping,
\[
   \widehat{G}/H^{\perp} \longrightarrow \Ccal, \quad [\gamma] \mapsto [\gamma] \cap \Ccal,
\]
establishes a bijection between $\widehat{G}/H^{\perp}$ and $\Ccal$, that can transmit
the algebraic and topological properties of $\widehat{G}/H^{\perp}$ to $\Ccal$.
Also, because of its use when dealing with fiber bundles, $\Ccal$ is also referred to
as a {\it section}.

Let $\mu = \mu_G$ and $\nu = \nu_{\widehat{G}}$ denote Haar measures on $G$ and $\widehat{G}$,
respectively. $L^1(G)$ is the space of integrable functions on $G$, and the absolutely convergent
{\it Fourier transform} of $f \in L^1(G)$ is the function $\widehat{f}$
defined as
\[
    \widehat{f}(\gamma) = \int_G\,f(x)\,\overline{(x, \gamma)}\,d\mu(x), \, \gamma \in \widehat{G}.
\]
The space of absolutely convergent Fourier transforms is denoted by $A(\widehat{G})$. 
If $f \in L^1(G)$, then 
it can be assumed that the support of $f$ is
$\sigma$-compact; further, the support of each element of
$A(\widehat{G})$ is $\sigma$-compact, see \cite[pages 20--21]{bene1975}. 
The inverse Fourier transform of $F$ defined on $\widehat{G}$ is denoted by $F^{\vee}$.

Let $H \subseteq G$ be a subgroup that is not necessarily closed. However, if $H$ is open, then it is closed.
Further, if $H$ is open and compact, then $H$ and $H^{\perp}$ are compact groups, and the quotients,
$G/H$ and $\widehat{G}/H^{\perp}$, are discrete groups. These facts were noted above. However, in this case,
we can and shall make the following choices of normalization on each of the six interrelated groups
we are discussing, where $H$ is both open and compact:
\begin{itemize}
\item $\mu$ satisfies $\mu(H) = 1$,
\item $\nu$ satisfies $\nu(H^{\perp}) = 1$,
\item $\mu_H =\mu|_H$,
\item $\nu_{H^{\perp}} =\nu|_{H^{\perp}}$,
\item $\mu_{G/H}$ is counting measure, and
\item $\nu_{\widehat{G}/H^{\perp}}$ is counting measure.
\end{itemize}

These choices guarantee that the Fourier transform is an isometry between 
$L^2(G)$ and $L^2(\widehat{G})$, and similarly between $L^2(H)$ and 
$L^2(\widehat{H}) = L^2(\widehat{G}/H^{\perp})$, and between
$L^2(G/H)$ and $L^2(\widehat{G/H})  = L^2(H^{\perp})$, see, e.g.,
\cite{rudi1962}, \cite{reit1968}, and \cite[Section 31.1]{HewRos1970}.
In calculations involving integrals over both $\widehat{G}$
and $H^{\perp}$, we shall use the notation $\gamma \in \widehat{G}$
and $\eta \in H^{\perp}$, and we shall write $d\eta$ instead of $d\nu_{H^{\perp}}(\eta)$.

\begin{rem}[Periodization and Weil's formula]
\label{rem:weil}
Classical Euclidean uniform sampling formulas depend essentially on 
periodization in terms of a discrete subgroup. 
Periodization induces a transformation from the given group to a compact
quotient group, thereby allowing the analysis to be conducted in terms of Fourier series
which lead to sampling formulas. For example, the Shannon wavelet is associated with the 
simplest (and slowly converging) Classical Sampling Formula derived from the {\it sinc}
sampling function. For more general sampling functions $s$, the Classical Sampling Formula
has the form,
\[
     \forall f \in L^2(\mathbb R), \, {\rm for} \, {\rm which} \quad {\rm supp}(\widehat{f}) 
     \subseteq [-\Omega, \Omega], \qquad f =
     T\sum_{n \in \ZZ}\,\widehat{f}(nT)\, \tau_{nT}s,
\]
with convergence in $L^2(\mathbb R)$ norm and uniformly on $\mathbb R$, 
where $0<T\Omega \leq 1$, ${\rm supp}\,(\widehat{s}) \subseteq [-1/T, 1/T]$,
and  $\widehat{s} = 1$ on $[-\Omega, \Omega]$, e.g., see \cite[Chapter 3.10]{bene1997}.
(The notation ${\rm supp}(F)$ designates the support of $F$.)
{\it Weil's formula}
\begin{equation}
\label{eq:weil}
     \int_G\,f(x)\,d\mu(x) =  \int_{G/H}\,\left( \int_H\,f(x+y)\,d\mu_H(y)\right)\,d\mu_{G/H}(x),
\end{equation}
is a far reaching generalization of the idea of periodization, which itself is manifested in the term
$\int_H\,f(x+y)\,d\mu_H(y)$. If two of the three Haar measures in \eqref{eq:weil} are given, then the
third can be normalized so that \eqref{eq:weil} is true on the space $C_c(G)$ of continuous functions 
with compact support. In our setting, with $\mu_H$ as the restriction of $\mu$ to $H$,
the choice of $\mu_{G/H}$ to be counting measure is the appropriate normalization for \eqref{eq:weil}. The 
analogous statement also applies to $\nu, \,\nu_{H^{\perp}}$,  and $\nu_{\widehat{G}/H^{\perp}}$.

A theme of this paper is to
overcome the fact that we do not have the luxury of having non-trivial
discrete subgroups for most of the number-theoretic groups we analyze.
This led to our idea and strategy developed in Sections \ref{sec:transbess} and \ref{sec:main}.

\end{rem}

%%%%%%%%%%%%%%%%%%%%%%%%%%%%%%%%

\subsection{Number theoretic LCAGs -- set-up}
\label{sec:numtheory}
Let $G$ be a LCAG with a compact open subgroup $H$; see,
for example, \cite{kobl1984}, \cite{RamVal1999}, \cite{robe2000}, \cite{serr1979}
for this setting.
The following facts are well known, overlap with some of the assertions in
Subsection \ref{sec:basictheory}, and proofs can be found in these
references as well as those listed in
Subsection \ref{sec:basictheory}. 

\begin{itemize}
    
     \item $H^{\perp}$ is compact open; $G/H$ is discrete; 
     $\widehat{G}/H^{\perp}$ is discrete; $\widehat{G/H} = H^{\perp}$ and is thus compact open.
     
     \item Generally, $G$ and $\widehat{G}$ do not have non-trivial discrete subgroups.
     
\end{itemize}
    
\begin{example}
\label{ex:QpZp}
Given the field $\mathbb Q$ of rational numbers and a prime number $p$. The $p$-adic
absolute value of 
$m/n \in {\mathbb Q} \setminus \{0\}$ is
\[
      \left|\frac{m}{n}\right|_p = p^{-\ord_p(m) + \ord_p(n)},
 \]
where the valuations $\ord_p(m)$ and $\ord_p(n)$ are the exponents of the highest power of 
$p$ dividing $m$ and $n$, respectively. The $p$-adic absolute value $|\cdot|_p$
satisfies the multiplicative equality, $|qr|_p = |q|_p \, |r|_p$,
and the non-Archimedean inequality, $|q+r|_p \leq \max \{|q|_p,\, |r|_p\}$,
for $q,r \in {\mathbb Q}$. The $p$-adic absolute value gives rise to the 
metric $d_p$ defined  as $d_p(q,r) = |q-r|_p$ on ${\mathbb Q}$. As such, $d_p$ satisfies the ultrametric
inequality, $d_p(q,r) \leq \max \{d_p(q,s), \,d_p(s,r)\}$, on ${\mathbb Q}$.

The completion of $\mathbb Q$
with respect to $d_p$ is the complete metric space
$\Qp$ of \emph{$p$-adic numbers},
and the completion of ${\mathbb Z} \subseteq {\mathbb Q}$
with respect to $|\cdot|_p$ is the subspace
$\Zp$ of \emph{$p$-adic integers}. $\Qp$ is a locally compact field of characteristic 
$\charact(\Qp) = 0$. 

As a completion, $\Zp$ is clearly closed, and, in fact, it is the compact unit ball of radius $1$
in $\Qp$. $\Zp$ is also the open ball of radius $1+\epsilon$ in $\Qp$ since the only distances 
actually attained in $\Qp$ are powers of $p$. Another proof that $\Zp$ is open,
and this is also the proof for the analogues of $\Zp$ for the 
more general local fields described in Section \ref{sec:ex}, is by direct calculation
using ultrametric inequalities, that are also induced by absolute values.
Algebraically, $\Zp$ is a compact open subring.
In particular, $\Zp$ is a compact open subgroup of $\Qp$ under addition.

Further, $\Qp$ is separable, and hence second countable since it is a metric space.
As a LCAG under addition, we also 
have that $\Qp = \widehat{\Qp}$ and $\Zp = {\Zp}^{\perp}$.
Moreover, $\Qp/\Zp$ and  
$\widehat{\Qp}/{\Zp}^{\perp}$ are countable discrete groups, for which
the order of each element is a finite power of $p$.
\end{example}
    
\begin{rem}[Countability and $\sigma$-compactness]
\label{rem:countable}
We noted in Subsection \ref{sec:basictheory} that for a LCAG $G$ we can assume without loss of
generality that ${\rm supp}\,f$ is $\sigma$-compact for $f \in L^1(G)$. We just noted in 
Example \ref{ex:QpZp} that $\Qp/\Zp$ and  
$\widehat{\Qp}/{\Zp}^{\perp}$ are countable discrete groups, but the same is not true for
arbitrary number theoretic groups, e.g., see
\cite{bene2003}. For number theoretic groups $G$, we shall be summing over 
the discrete group $G/H$ and a set $\Ccal$ of coset representatives of $\widehat{G}/H^{\perp}$.
However, the sums are well-defined by the aforementioned $\sigma$-compactness,
and so we do not have to assume these sets are countable to prove our assertions,
e.g., see the proof of Proposition \ref{prop:bessel}.

\end{rem}
%%%%%%%%%%%%%%%%%%%%%%%%%%%%%%%
%%%%%%%%%%%%%%%%%%%%%%%%%%%%%%%

\section{Translation and Bessel's inequality}
\label{sec:transbess}
%%%%%%%%%%%%%%%%%%%%%%%%%%%

\subsection{Translation}
\label{sec:trans}

Our point of view is to think of translation in terms
of a group of operators under
composition as opposed to evaluation on an underlying discrete subgroup.

 Let $G$ be a LCAG, let $H \subseteq G$ be a closed subgroup, and let
 $\Ccal$ be a set of coset representatives of $\widehat{G}/H^{\perp}$.
For any fixed $[x]=x+H \in G/H$, the 
{\it translation operator}, 
\[
      \tau_{[x],\Ccal} : L^2(G) \longrightarrow L^2(G),
\]
is well-defined by the formula,
\[
     \forall\, f\in L^2(G), \quad \tau_{[x],\Ccal}\,f  =  f \ast w_{[x],\Ccal}^{\vee},
\]
where $w_{[x],\Ccal} : \widehat{G} \longrightarrow \CC, \, \gamma \mapsto 
\overline{(x, \eta_\gamma)}$, and $\gamma - \eta_\gamma = \sigma_\gamma \in \Ccal$.
The fact that it is well-defined is a consequence of the validity 
of the Parseval formula in this setting, and because $\widehat{f} \in L^2(\widehat{G})$
and $w_{[x],\Ccal} \in L^{\infty}(\widehat{G})$.
Thus, we think of a group of translation operators  under convolution instead of
an underlying discrete subgroup. Note that $w_{[x],\Ccal}$ depends on $[x]$ and $\Ccal$, 
but not on $x$.

This notion of translation was originally defined for our wavelet theory on local fields \cite{BenBen2004a} (2004).

\begin{example}
\label{ex:transchi}
Given a LCAG $G$ with compact subgroup $H$.
Let $a\in G$ and $\beta\in \widehat{G}$, and let $f(x)=(x,\beta){\mathbb 1}_{a+H}(x)$, i.e.,
\[ 
       \forall  x \in G, \quad f(x) = \begin{cases}
        (x,\beta), & \text{ if } x-a \in H,\\
         0, & \text{ otherwise,} \end{cases} 
\]
where ${\mathbb 1}_X$ is the characteristic function of a set $X$.
Note that $f \in L^1(G)$ since $H$ is a compact set.
We have
\begin{align}
\label{eq:cptH}
\widehat{f}(\gamma) &= \int_{a+H} (x,\beta) \overline{(x,\gamma)} \, dx
= \int_{H} (x+a,\beta-\gamma) \, dx \notag
\\
&= (a,\beta-\gamma)\int_{H} (x,\beta-\gamma) \, dx
= (a,\beta-\gamma) {\mathbb 1}_{\beta+H^{\perp}}(\gamma),
\end{align}
where the last equality follows because the compact set $H$ is  group. In fact, consider the cases
$\gamma \in \beta + H^{\perp}$ and $\gamma \notin \beta + H^{\perp}$. If $\gamma \in \beta + H^{\perp}$
and $y \in H$, then $(y, \beta - \gamma) = \overline{(y, \gamma - \beta)} =1$; and so the last integral
in equation (\ref{eq:cptH}) is $1 = {\mathbb 1}_{\beta+H^{\perp}}(\gamma)$ for $\gamma \in \beta + H^{\perp}$.
If $\gamma \notin \beta + H^{\perp}$, then there is $y \in H$ for which $\overline{(y,\gamma - \beta)} \neq 1$,
and we compute
\[
     \int_H\, (x,\gamma - \beta)\,dx = \overline{(y,\gamma - \beta)} \,\int_H\, (x,\gamma - \beta)\,dx;
\]
from which we can conclude that $ \int_H\, (x,\gamma - \beta)\,dx = 0$, which can be written as  
${\mathbb 1}_{\beta+H^{\perp}}(\gamma)$ for such $\gamma \notin \beta + H^{\perp}$.

Hence, for any $[b]\in G/H$, we compute
\[ \widehat{\tau_{[b],\Ccal}f}(\gamma)
= (a,\beta-\gamma) \overline{(b,\eta_\gamma)} {\mathbb 1}_{\beta+H^{\perp}}(\gamma)
= (a,\beta-\gamma) \overline{(b,\gamma - \sigma_\beta)} {\mathbb 1}_{\beta+H^{\perp}}(\gamma), 
\]
since, for all $\gamma\in \beta+H^{\perp}$, we have
$\sigma_\gamma=\sigma_\beta$, whence $\eta_\gamma = \gamma-\sigma_\beta$.
Therefore,
\begin{align*}
\tau_{[b],\Ccal}f(x) &=
\int_{\beta+H^{\perp}} (a,\beta-\gamma) (b,\sigma_\beta - \gamma) (x,\gamma) \, d\gamma
= \int_{H^{\perp}} \overline{(a,\gamma)} \overline{(b,\beta+\gamma-\sigma_{\beta})}
(x,\beta+\gamma) \, d\gamma
\\
&= (b,\sigma_{\beta})(x-b,\beta) \int_{H^{\perp}} (x-a-b,\gamma) \, d\gamma
= (b,\sigma_\beta) (x-b,\beta) {\mathbb 1}_{a+b+H}(x)
\\
&= (b,\sigma_\beta) f(x-b).
\end{align*}
\end{example}

Example~\ref{ex:transchi} illustrates that
the translation operators $\tau_{[b],\Ccal}$ are related to ordinary translation,
but are not quite the same. The key advantage they provide is that they form
a group isomorphic to $G/H$, even though $G$ generally does not contain
a subgroup isomorphic to $G/H$. 
Indeed, for $[x],[y]\in G/H$, it is easy to check that
$\tau_{[x],\Ccal} \circ \tau_{[y],\Ccal} = \tau_{[x]+[y],\Ccal}$,
with $\tau_{[x],\Ccal}= \tau_{[y],\Ccal}$ if and only if $[x]=[y]$, i.e., if and only if $x+H=y+H$,
see \cite[Remark~2.3]{BenBen2004a}.

%%%%%%%%%%%%%%%%%%%%%%%%%%%%

\subsection{$V_{\Ccal,f}$ and $\Phi_{\Ccal,f}(g)$}
\label{sec:spanspecsym}

Let $G$ be a LCAG, let $H \subseteq G$ be a compact open subgroup, and let
 $\Ccal$ be a set of coset representatives of $\widehat{G}/H^{\perp}$.
Take $f \in L^2(G)$ and define the {\it closed span of translates},
\[
    V_{\Ccal,f} = \overline{\rm span}\,\{\tau_{[x],\Ccal}\,f : [x] \in G/H\},
\]
and the {\it spectral symbol},
\begin{equation}
\label{eq:phifg}
 \forall\, g \in L^2(G), \quad \Phi_{\Ccal,f}(g)(\eta) = 
 \sum_{\sigma \in \Ccal}\,\widehat{g}(\eta + \sigma)\overline{\widehat{f}(\eta + \sigma)}, \; \eta \in H^{\perp}.
\end{equation}
Clearly, $\Phi_{\Ccal,f}(g) \in L^1(H^{\perp})$. Denote $ \Phi_{\Ccal,f}(f)$ as   $\Phi_{\Ccal}(f)$.

\begin{rem}[Orthonormal basis of characters]
\label{rem:ortho}
The discrete set $G/H$ of characters of the compact group $H^{\perp}$ is 
easily seen to be orthonormal. Also, the set $P$ of trigonometric polynomials,
\[
   \Theta_F(\eta) = \sum_{[x]\in F}\,c_{[x]}\, \overline{([x], \eta)}, \quad \eta \in H^{\perp}, 
\]
where $F \subseteq G/H, {\rm card}(F) < \infty$, and $c_{[x]} \in \CC$, is a dense sub-algebra 
of $C(H^{\perp})$, the function algebra of continuous functions on $H^{\perp}$ taken with the 
sup-norm. This is a consequence of the Stone-Weierstrass theorem since $P$ is closed under
complex conjugation and $G/H$ separates points on $H^{\perp}$. By this density, $P$ is also dense in
$L^2(H^{\perp})$. By the aforementioned orthonormality, and the fact that
$P$ is dense in $L^2(H^{\perp})$, a standard Hilbert space argument shows that $G/H$ is an
orthonormal basis for $L^2(H^{\perp})$, see, e.g., \cite[pages 27--28]{GohGol1981}. We shall use the 
$L^2(H^{\perp})$-norm convergence of Fourier series theorem in part {\it iii}
of the proof of Proposition \ref{prop:bessel} and in part {\it iv} of the proof of Lemma \ref{lem:54}.
\end{rem}

Proposition \ref{prop:bessel} is necessary for the proof of our main result, Theorem \ref{thm:char}.
The Euclidean analogue of Proposition \ref{prop:bessel} is 
required for the proof of Theorem \ref{thm:benli}. Its proof may appear
more direct than
what follows for number theoretic groups, but the idea is the same. In Proposition \ref{prop:bessel}
we are integrating
over the compact group $H^{\perp}$ instead of $\RR^d/\ZZ^d$, which is really the section
$[0, 1)^d$, and then summing over the section $\Ccal$ instead of the discrete subgroup $\ZZ^d$.

 \begin{prop}
 \label{prop:bessel}
 Let $G$ be a LCAG, let $H$ be a compact open subgroup,
 and let
 $\Ccal$ be a set of coset representatives of $\widehat{G}/H^{\perp}$. 
 Let $f \in L^2(G)$, and assume the sequence, $\{\tau_{[x],\Ccal}\,f : [x] \in G/H \}$,
 satisfies Bessel's inequality,
 \begin{equation}
 \label{eq:bessel}
    \exists\,B>0 \; {\rm such}\,{\rm that} \; \forall\,g \in V_{\Ccal,f}, \quad 
    \sum_{[x]\in G/H}\, |\langle g, \tau_{[x],\Ccal}\,f \rangle_{L^2(G)}|^2 \leq B\, {\norm{g}}_{L^2(G)}.
 \end{equation}
 Then, $\Phi_{\Ccal,f}(g) \in L^2(H^{\perp})$ for each $g \in L^2(G)$, and,
 in particular, $\Phi_{\Ccal}(f) \in L^2(H^{\perp})$.
 \end{prop}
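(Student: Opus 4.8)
The plan is to reduce everything to a statement about Fourier coefficients on the compact group $H^{\perp}$. First I would compute the inner product $\langle g, \tau_{[x],\Ccal} f\rangle_{L^2(G)}$ and recognize it as the $[x]$-th Fourier coefficient of $\Phi_{\Ccal,f}(g)$ with respect to the orthonormal basis $G/H$ of $L^2(H^{\perp})$ described in Remark~\ref{rem:ortho}. Concretely, using the Parseval formula on $L^2(G)$ together with the defining relation $\widehat{\tau_{[x],\Ccal} f}(\gamma) = \overline{(x,\eta_\gamma)}\,\widehat{f}(\gamma)$, one writes
\[
\langle g, \tau_{[x],\Ccal} f\rangle_{L^2(G)}
= \int_{\widehat{G}} \widehat{g}(\gamma)\,\overline{\widehat{f}(\gamma)}\,(x,\eta_\gamma)\,d\nu(\gamma).
\]
Then I would break the integral over $\widehat{G}$ into an integral over the fiber $H^{\perp}$ and a sum over the section $\Ccal$ via Weil's formula \eqref{eq:weil} (with $\nu_{\widehat{G}/H^{\perp}}$ counting measure), using that $\eta_{\eta+\sigma} = \eta$ for $\sigma \in \Ccal$ and $\eta \in H^{\perp}$. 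This yields
\[
\langle g, \tau_{[x],\Ccal} f\rangle_{L^2(G)}
= \int_{H^{\perp}} \Big(\sum_{\sigma\in\Ccal} \widehat{g}(\eta+\sigma)\,\overline{\widehat{f}(\eta+\sigma)}\Big) (x,\eta)\,d\eta
= \int_{H^{\perp}} \Phi_{\Ccal,f}(g)(\eta)\,\overline{\overline{(x,\eta)}}\,d\eta,
\]
so that (up to the harmless complex conjugation in the character) $\langle g,\tau_{[x],\Ccal}f\rangle$ is exactly the Fourier coefficient $\widehat{\Phi_{\Ccal,f}(g)}([x])$ of $\Phi_{\Ccal,f}(g) \in L^1(H^{\perp})$. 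A point requiring a small amount of care here is the interchange of the sum over $\Ccal$ with the integral over $H^{\perp}$; this is where the $\sigma$-compactness remarks (Remark~\ref{rem:countable}) and the fact that $\widehat{g},\widehat{f}\in L^2(\widehat{G})$ make the manipulation legitimate, so the sum is really a well-defined (countable, on its support) sum and Fubini/Tonelli applies.

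With this identification in hand, the Bessel hypothesis \eqref{eq:bessel} says precisely that the sequence of Fourier coefficients $\big(\widehat{\Phi_{\Ccal,f}(g)}([x])\big)_{[x]\in G/H}$ is square-summable, with $\sum_{[x]} |\widehat{\Phi_{\Ccal,f}(g)}([x])|^2 \le B\,\norm{g}_{L^2(G)}$, at least for $g \in V_{\Ccal,f}$. Now I would invoke the fact that $G/H$ is an orthonormal basis for $L^2(H^{\perp})$: an element of $L^1(H^{\perp})$ whose Fourier coefficient sequence lies in $\ell^2(G/H)$ must itself belong to $L^2(H^{\perp})$, because its partial Fourier sums form a Cauchy sequence in $L^2(H^{\perp})$ converging (in $L^2$, hence in $L^1$ on the compact group $H^{\perp}$) to a limit that must agree with $\Phi_{\Ccal,f}(g)$ since they have the same Fourier coefficients. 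This gives $\Phi_{\Ccal,f}(g) \in L^2(H^{\perp})$ whenever $g \in V_{\Ccal,f}$, and in particular $\Phi_{\Ccal}(f) = \Phi_{\Ccal,f}(f) \in L^2(H^{\perp})$ since $f \in V_{\Ccal,f}$.

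Finally I would promote the conclusion from $g \in V_{\Ccal,f}$ to arbitrary $g \in L^2(G)$. Writing $g = g_0 + g_1$ with $g_0 \in V_{\Ccal,f}$ and $g_1 \perp V_{\Ccal,f}$, each $\tau_{[x],\Ccal} f$ lies in $V_{\Ccal,f}$, so $\langle g_1, \tau_{[x],\Ccal} f\rangle = 0$ for all $[x]$; by the Fourier-coefficient identity of the first paragraph this forces $\Phi_{\Ccal,f}(g_1) = 0$ a.e. on $H^{\perp}$, whence $\Phi_{\Ccal,f}(g) = \Phi_{\Ccal,f}(g_0) \in L^2(H^{\perp})$. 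The main obstacle, to the extent there is one, is not any deep idea but rather the careful bookkeeping in the first paragraph: justifying the Weil-formula splitting and the interchange of summation over the (possibly uncountable) section $\Ccal$ with integration, and tracking the complex conjugations in the characters so that the inner product is correctly matched with a Fourier coefficient on $H^{\perp}$ rather than its conjugate or reflection. Everything after that identification is a routine application of the orthonormal-basis property from Remark~\ref{rem:ortho}.
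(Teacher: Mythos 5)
Your proposal is correct and follows essentially the same route as the paper's proof: identify $\langle g,\tau_{[x],\Ccal}f\rangle_{L^2(G)}$ with the $[x]$-th Fourier coefficient of $\Phi_{\Ccal,f}(g)\in L^1(H^{\perp})$ via Plancherel and the partition of $\widehat{G}$ into cosets of $H^{\perp}$, observe that the Bessel hypothesis places these coefficients in $\ell^2(G/H)$, and conclude via the orthonormal basis of Remark~\ref{rem:ortho} together with $L^1$-uniqueness of Fourier series that $\Phi_{\Ccal,f}(g)\in L^2(H^{\perp})$. Your closing step, passing from $g\in V_{\Ccal,f}$ to arbitrary $g\in L^2(G)$ by orthogonal decomposition and linearity of $g\mapsto\Phi_{\Ccal,f}(g)$, is a point the paper leaves implicit, and it is handled correctly.
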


 \begin{proof}
   {\it i.} Let $g \in L^2(G)$. As observed after \eqref{eq:phifg}, we have $\Phi_{\Ccal,f}(g) \in L^1(H^{\perp})$.
 Then,
 \begin{equation}
 \label{eq:ghathperp}
      \langle g, f \rangle_{ L^2(G)} =  \int_{H^{\perp}}\,\Phi_{\Ccal,f}(g)(\eta)\,d\eta.
 \end{equation}
 In fact, we compute
\begin{align*}
\int_{H^{\perp}}\,\Phi_{\Ccal,f}(g)(\eta)\,d\eta
&= \sum_{\sigma \in \Ccal}\, \int_{H^{\perp}}\,\widehat{g}(\eta + \sigma)\,
\overline{\widehat{f}(\eta + \sigma)} \,d\eta
\\
&= \sum_{\sigma \in \Ccal}\,\int_{\sigma + H^{\perp}}\widehat{g}(\gamma)\,
\overline{\widehat{f}(\gamma)}\,d\gamma
=  \langle \widehat{g}, \widehat{f} \rangle _{ L^2(\widehat{G})} = \langle g, f \rangle _{ L^2(G)},
\end{align*}
by means of the Fubini-Tonelli theorem \cite[pages 132--140]{BenCza2009}, a change of variables,
a partitioning of $\widehat{G}$, and the Plancherel theorem, see this rationale in reverse
to justify the calculation after \eqref{eq:besselhyp}.
 
Further, we can write
\begin{equation}
\label{eq:transmod}
        \widehat{(\tau_{[x],\Ccal}f)}(\eta + \sigma) 
       =  (\widehat{f}\,w_{[x],\Ccal})(\eta + \sigma)  
       = \overline{(x, \eta)}\,\widehat{f}(\eta + \sigma),
\end{equation}
where we have used the definition of translation in Subsection \ref{sec:trans}
and the unique representation for coset representatives for the last equality.

{\it ii.} Note that
\begin{equation*}
        \Phi_{\Ccal,f}(g) \in L^2(H^{\perp}) \; \Longleftrightarrow \;
        \int_{H^{\perp}} \, \left|\sum_{\sigma \in \Ccal}\, \widehat{g}(\eta + \sigma)\,
     \overline{\widehat{f}(\eta + \sigma)}\right|^2 \,d\eta < \infty.
\end{equation*}
Taking into account Remark \ref{rem:countable}, we know 
by assumption \eqref{eq:bessel} that
\[
      \sum_{[x]\in G/H}\, |\langle g, \tau_{[x],\Ccal}\,f \rangle_{ L^2(G)}|^2 < \infty;
\]
and so,
\begin{equation}
\label{eq:besselhyp}
        \sum_{[x] \in G/H}\, \left| \int_{H^{\perp}} \,(x, \eta)\, \left( \sum_{\sigma \in \Ccal}\, \widehat{g}(\eta + \sigma)
     \overline{\widehat{f}(\eta + \sigma)} \right) \, d\eta\right|^2 
         =  \sum_{[x]\in G/H}\, |\langle g, \tau_{[x],\Ccal}\,f \rangle_{L^2(G)}|^2  
     < \infty.
\end{equation}
In fact, repeating the calculation after \eqref{eq:ghathperp}, but in reverse order, we have
\begin{align*}
    \langle g, \tau_{[x],\Ccal} & \,f \rangle_{ L^2(G)}
    = \int_{\widehat{G}}\,\widehat{g}(\gamma)\,
    \overline{ \widehat{\tau_{[x],\Ccal}\,f }(\gamma)}\,d\gamma
\\
    & = \sum_{\sigma \in \Ccal}\,\int_{\sigma + H^{\perp}}\,\widehat{g}(\gamma)\,
    \overline{ \widehat{\tau_{[x],\Ccal}\,f }(\gamma)}\,d\gamma
    = \sum_{\sigma \in \Ccal}\,\int_{H^{\perp}}\,\widehat{g}(\eta + \sigma)\,
    \overline{ \widehat{\tau_{[x],\Ccal}\,f } (\eta + \sigma)}\,d\eta
\\
    & = \sum_{\sigma \in \Ccal}\,\int_{H^{\perp}}\,\widehat{g}(\eta + \sigma)\,
    \overline{\overline{(x, \eta) }\,\widehat{f}(\eta + \sigma)}\,d\eta
    =  \int_{H^{\perp}} \,(x, \eta)\, \left( \sum_{\sigma \in \Ccal}\, \widehat{g}(\eta + \sigma)
     \overline{\widehat{f}(\eta + \sigma)} \right) \, d\eta,
\end{align*}
where the first equality follows from the Plancherel theorem, the second by partitioning $\widehat{G}$,
the third by change of variables and carrying out the calculation on the group $H^{\perp}$
instead of $\widehat{G}$, the fourth by the translation-modulation property \eqref{eq:transmod},
and the last by the Fubini-Tonelli theorem.

Therefore, since
\[
         \Phi_{\Ccal,f}(g)(\eta) = \sum_{\sigma \in \Ccal}\, \widehat{g}(\eta + \sigma)
     \overline{\widehat{f}(\eta + \sigma)} \in L^1(H^{\perp}),
\]
we see from \eqref{eq:besselhyp}
that the sequence, $\{c_{[x]}(g,f)  : [x] \in G/H\}$,
of Fourier coefficients,
\begin{equation}
\label{eq:coeff}
            c_{[x]}(g,f) =  \int_{H^{\perp}}\,\Phi_{\Ccal,f}(g)(\eta)\,(x, \eta)\, d\eta,
\end{equation}
is an element of $\ell^2(G/H)$. 
    
 {\it iii.}   
The set $\{ ([u], \cdot) : [u] \in G/H \}$ of characters  
of $H^{\perp}$ is an orthonormal sequence in $L^2(H^{\perp})$,
see Remark \ref{rem:ortho}. 
Since  $\{c_{[x]}(g,f) \} \in \ell^2(G/H)$, an elementary Hilbert space 
Cauchy sequence argument shows that the Fourier series $F_f(g)$, defined as
\[
        F_f(g)(\eta) = \sum_{[u] \in G/H}\, c_{[u]}(g,f)\,\overline{([u], \eta)}, \quad \eta \in H^{\perp},
\]
is a well-defined element of $L^2(H^{\perp})$, where convergence is in $L^2(H^{\perp})$-norm.
Further, by the orthonormality, and another standard Hilbert space calculation,
\begin{equation}
\label{eq:Ffg}
      c_{[u]}(g,f) =  \big\langle  F_f(g)(\cdot), \overline{([u], \cdot)}\big\rangle_{L^2(H^{\perp})} =
      \int_{H^{\perp}}\,F_f(g)(\eta)\,([u], \eta)\,d\eta,
\end{equation}
where $\langle \cdot  , \cdot \rangle_{L^2(H^{\perp})}$ is the inner product on the Hilbert space $L^2(H^{\perp})$.

Combining equations \eqref{eq:coeff} and \eqref{eq:Ffg}, and noting that 
$\Phi_{\Ccal,f}(g), F_f(g) \in L^1(H^{\perp})$, we have
\[
    \forall\,[u] \in G/H, \quad \int_{H^{\perp}}\,\big(F_f(g)(\eta) - \Phi_{\Ccal,f}(g)(\eta) \big)\,\overline{([u], \eta)}\,d\eta
    = 0.
\]
Consequently, by the $L^1$-uniqueness theorem for Fourier series, we have $\Phi_{\Ccal,f}(g) = F_f(g) \; {\rm a.e.}$,
and so we can conclude that $\Phi_{\Ccal,f}(g) \in L^2(H^{\perp})$ since $F_f(g) \in L^2(H^{\perp})$.
\end{proof}

%%%%%%%%%%%%%%%%%%%%%%%%%%%
%%%%%%%%%%%%%%%%%%%%%%%%%%%%

\section{Main result}
\label{sec:main}
%%%%%%%%%%%%%%%%%%%%%%%%%%%%%

\subsection{Lemmas}
\label{sec:lemmas}

Throughout this section $G$ is a LCAG with compact open subgroup $H$
and 
 $\Ccal$ is a set of coset representatives of $\widehat{G}/H^{\perp}$.

\begin{lem}
\label{lem:53}
Let $f \in  L^2(G)$ and define $g_F = \sum_{[x]\in F}\,c_{[x]} \tau_{[x],\Ccal}\,f$ and
\[
    \Theta_F(\eta) = \sum_{[x]\in F}\,c_{[x]}\, \overline{([x], \eta)}, \quad \eta \in H^{\perp},
\]
where $F \subseteq G/H, {\rm card}(F) < \infty$, and $c_{[x]} = c_{[x]}(g_F) \in \CC$.
Then, $g_F \in L^2(G), \Theta_F \in L^{\infty}(H^{\perp})$, and
\begin{equation}
\label{eq:21}
       \norm{g_F}_{L^2(G)}^2 = \int_{H^{\perp}}\,|\Theta_F(\eta)|^2\,\Phi_{\Ccal}(f)(\eta)\,d\eta
       < \infty.
    \end{equation}
\end{lem}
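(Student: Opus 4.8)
The plan is a direct Plancherel computation, carried out cosetwise over the tiling $\{\sigma + H^{\perp} : \sigma \in \Ccal\}$ of $\widehat{G}$, in the same spirit as the reverse-order calculation in the proof of Proposition~\ref{prop:bessel}. The two easy assertions come first: $g_F$ is a finite $\CC$-linear combination of the elements $\tau_{[x],\Ccal}\,f \in L^2(G)$, hence $g_F \in L^2(G)$; and $\Theta_F$ is a finite trigonometric polynomial on the compact group $H^{\perp}$, i.e.\ a finite sum of characters $\overline{([x],\cdot)}$, hence continuous and therefore bounded, so $\Theta_F \in L^{\infty}(H^{\perp})$. It remains to establish the identity \eqref{eq:21}.

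The key step is to identify $\widehat{g_F}$ on each coset $\sigma + H^{\perp}$, $\sigma \in \Ccal$. By linearity of the Fourier transform and the definition of translation in Subsection~\ref{sec:trans} (see \eqref{eq:transmod}), one has $\widehat{g_F} = \sum_{[x]\in F} c_{[x]}\,\widehat{f}\,w_{[x],\Ccal}$, and for $\eta \in H^{\perp}$ and $\sigma \in \Ccal$ the coset representative of $\sigma + \eta$ is $\sigma$ itself, so $w_{[x],\Ccal}(\sigma+\eta) = \overline{(x,\eta)} = \overline{([x],\eta)}$ because $\eta \in H^{\perp}$ annihilates $H$. Hence
\[
   \widehat{g_F}(\sigma+\eta) = \widehat{f}(\sigma+\eta)\sum_{[x]\in F} c_{[x]}\,\overline{([x],\eta)} = \widehat{f}(\sigma+\eta)\,\Theta_F(\eta), \qquad \eta \in H^{\perp},\ \sigma \in \Ccal .
\]
Working coset by coset also sidesteps any question of measurability of a global cross-section $\gamma \mapsto \eta_\gamma$: on $\sigma + H^{\perp}$ we simply have $\eta_\gamma = \eta$, and $\eta \mapsto \Theta_F(\eta)$ is continuous while $\eta \mapsto \widehat{f}(\sigma+\eta)$ is measurable.

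Now apply the Plancherel theorem, partition $\widehat{G}$ into the cosets $\sigma + H^{\perp}$, $\sigma \in \Ccal$, change variables $\gamma = \sigma + \eta$ on each piece, and interchange the sum over $\Ccal$ with the integral over $H^{\perp}$ by the Fubini--Tonelli theorem (the integrand being nonnegative), with Remark~\ref{rem:countable} guaranteeing that the sum over $\Ccal$ is well-defined even when $\Ccal$ is uncountable, since only countably many terms are nonzero:
\begin{align*}
   \norm{g_F}_{L^2(G)}^2
   &= \int_{\widehat{G}} |\widehat{g_F}(\gamma)|^2\,d\gamma
   = \sum_{\sigma \in \Ccal}\int_{H^{\perp}} |\widehat{f}(\sigma+\eta)|^2\,|\Theta_F(\eta)|^2\,d\eta
   \\
   &= \int_{H^{\perp}} |\Theta_F(\eta)|^2 \left( \sum_{\sigma \in \Ccal} |\widehat{f}(\sigma+\eta)|^2 \right) d\eta
   = \int_{H^{\perp}} |\Theta_F(\eta)|^2\,\Phi_{\Ccal}(f)(\eta)\,d\eta ,
\end{align*}
where the last equality is the definition \eqref{eq:phifg} of $\Phi_{\Ccal}(f) = \Phi_{\Ccal,f}(f)$. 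Finiteness of this integral is then immediate, either because the left-hand side $\norm{g_F}_{L^2(G)}^2$ is finite, or directly because $\Theta_F \in L^{\infty}(H^{\perp})$ and $\Phi_{\Ccal}(f) \in L^1(H^{\perp})$ (indeed $\int_{H^{\perp}}\Phi_{\Ccal}(f) = \norm{f}_{L^2(G)}^2$ by \eqref{eq:ghathperp} with $g = f$), so the integral is at most $\norm{\Theta_F}_{L^{\infty}(H^{\perp})}^2\,\norm{f}_{L^2(G)}^2$.

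The only point requiring genuine care is the measure-theoretic bookkeeping in the displayed chain: passing from an integral over $\widehat{G}$ to a sum of integrals over the cosets $\sigma + H^{\perp}$, and then interchanging that sum with the integral over $H^{\perp}$. Both are legitimate because the integrand is nonnegative, and the potential uncountability of $\Ccal$ is harmless by Remark~\ref{rem:countable}. Beyond this routine justification I do not anticipate any obstacle, as the rest is a formal consequence of Plancherel and the definition of $\tau_{[x],\Ccal}$.
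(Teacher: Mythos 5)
Your proposal is correct and follows essentially the same route as the paper: Plancherel, partition of $\widehat{G}$ into the cosets $\sigma+H^{\perp}$, and Fubini--Tonelli. The only (cosmetic) difference is that you factor $\widehat{g_F}(\sigma+\eta)=\widehat{f}(\sigma+\eta)\,\Theta_F(\eta)$ before taking the modulus squared, whereas the paper expands $|\widehat{g_F}|^2$ as a double sum over $F\times F$ and recombines it into $|\Theta_F|^2\,\Phi_{\Ccal}(f)$ at the end; your version makes the Tonelli interchange slightly more transparent since the integrand is manifestly nonnegative.
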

\begin{proof}
For each $\gamma \in \widehat{G}$, we designate its unique representation in terms of the given
section $\Ccal$ by $\gamma = \sigma_{\gamma} + \eta_{\gamma}$, where $\sigma_{\gamma} \in \Ccal$
and $\eta_{\gamma} \in H^{\perp}$. Using the fact that 
\[
\widehat{(\tau_{[x],\Ccal}\, f)}(\gamma) = 
\widehat{f}(\gamma)\,w_{[x],\Ccal}(\gamma) = \widehat{f}(\gamma) \, \overline{([x], \eta_{\gamma})},
\]
we make the following computation which invokes the Fubini-Tonelli theorem:
\begin{align*}
     \norm{g_F}_{L^2(G)}^2
     &= \int_{\widehat{G}}\,\widehat{g_F}(\gamma)\overline{\widehat{g_F}(\gamma)}\,d\gamma
     =  \sum_{[x], [y] \in F}\,c_{[x]}\overline{c_{[y]}}\,\int_{\widehat{G}}
     \widehat{f}(\gamma)\overline{\widehat{f}(\gamma)}
     \overline{([x], \eta_{\gamma})}([y], \eta_{\gamma})\,d\gamma
\\
     &= \sum_{[x], [y] \in F}\,c_{[x]}\overline{c_{[y]}} \sum_{\sigma \in \Ccal}\int_{H^{\perp}}
     \widehat{f}(\eta + \sigma)\overline{\widehat{f}(\eta + \sigma)}
     \overline{([x], \eta)}([y], \eta)\,d\eta
\\
     &=  \sum_{[x], [y] \in F}\,c_{[x]}\overline{c_{[y]}} \sum_{\sigma \in \Ccal}\int_{H^{\perp}}
     \big|\widehat{f}(\eta + \sigma)\big|^2\,\overline{([x], \eta)}([y], \eta)\,d\eta
\\
     &= \int_{H^{\perp}} \Bigg(\sum_{[x] \in F}c_{[x]}\,\overline{([x],\eta)}\Bigg)
             \overline{\Bigg(\sum_{[y]\in F} c_{[y]}\overline{([y],\eta)}\Bigg)}\,
             \left(\sum_{\sigma \in \Ccal}\big|\widehat{f}(\eta + \sigma)\big|^2 \right)\,d\eta
\\
     &= \int_{H^{\perp}}\,|\Theta_F(\eta)|^2\,\Phi_{\Ccal}(f)(\eta)\,d\eta.
\end{align*}
Here, $d\gamma$ represents Haar measure on $\widehat{G}$, thinking of the sequence 
$\{\sigma + H^{\perp} : \sigma \in \Ccal \}$ of cosets
of $\widehat{G}$ as a partition of $\widehat{G}$, 
where $d\eta$ represents normalized Haar measure on $H^{\perp}$ so that the last equality is also
valid with $d\eta$ replaced by $d\gamma$, and where $\eta$ in the third equality 
 corresponds to $\eta_{\gamma}$
in the representation $\gamma = \sigma + \eta_{\gamma}$.
\end{proof}

The following is proved by a routine argument in real analysis.

\begin{lem}
\label{lem:52}
Let $f \in  L^2(G)$, and consider the frame condition,
\begin{equation}
\label{eq:tranframe}
      A\,\norm{g}_{L^2(G)}^2 \leq  \sum_{[x]\in G/H}\, |\langle g, \tau_{[x],\Ccal}\,f \rangle_{L^2(G)}|^2 \leq
      B\,\norm{g}_{L^2(G)}^2.
\end{equation}
Then, \eqref{eq:tranframe} is valid for each $g \in V_{\Ccal,f}$ if and only if 
\eqref{eq:tranframe} is valid for each $g \in {\rm span}\{\tau_{[x],\Ccal}\,f: [x] \in G/H \}$.
\end{lem}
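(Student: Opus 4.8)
The \emph{only if} direction is immediate, since $\mathrm{span}\{\tau_{[x],\Ccal}\,f : [x] \in G/H\} \subseteq V_{\Ccal,f}$. For the \emph{if} direction, write $W = \mathrm{span}\{\tau_{[x],\Ccal}\,f : [x] \in G/H\}$, so that $V_{\Ccal,f} = \overline{W}$, and assume \eqref{eq:tranframe} holds for every $g \in W$. Fix $g \in V_{\Ccal,f}$ and choose a sequence $g_n \in W$ with $g_n \to g$ in $L^2(G)$.

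First I would transfer the upper (Bessel) bound. For each fixed $[x]$, the functional $h \mapsto \langle h, \tau_{[x],\Ccal}\,f \rangle_{L^2(G)}$ is continuous on $L^2(G)$, since $\tau_{[x],\Ccal}\,f \in L^2(G)$; hence $|\langle g_n, \tau_{[x],\Ccal}\,f \rangle_{L^2(G)}|^2 \to |\langle g, \tau_{[x],\Ccal}\,f \rangle_{L^2(G)}|^2$ for every $[x]$. Applying Fatou's lemma for the counting measure on $G/H$ (legitimate whether or not $G/H$ is countable; recall Remark \ref{rem:countable}) gives
\[
\sum_{[x]\in G/H} |\langle g, \tau_{[x],\Ccal}\,f \rangle_{L^2(G)}|^2 \le \liminf_{n} \sum_{[x]\in G/H} |\langle g_n, \tau_{[x],\Ccal}\,f \rangle_{L^2(G)}|^2 \le \liminf_{n} B\,\norm{g_n}_{L^2(G)}^2 = B\,\norm{g}_{L^2(G)}^2 .
\]
In particular the series converges, so the analysis map $T\colon V_{\Ccal,f}\to \ell^2(G/H)$, $Th = \{\langle h, \tau_{[x],\Ccal}\,f \rangle_{L^2(G)}\}_{[x]\in G/H}$, is well defined; and since $V_{\Ccal,f}$ is a linear subspace, the estimate applied to differences $h-h'$ shows that $T$ is Lipschitz, with $\norm{Th - Th'}_{\ell^2(G/H)}^2 \le B\,\norm{h-h'}_{L^2(G)}^2$.

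Now I would transfer the lower bound. By the Lipschitz property just established, $Tg_n \to Tg$ in $\ell^2(G/H)$, hence $\norm{Tg_n}_{\ell^2(G/H)}^2 \to \norm{Tg}_{\ell^2(G/H)}^2$; also $\norm{g_n}_{L^2(G)}^2 \to \norm{g}_{L^2(G)}^2$. Passing to the limit in $\norm{Tg_n}_{\ell^2(G/H)}^2 \ge A\,\norm{g_n}_{L^2(G)}^2$ yields $\norm{Tg}_{\ell^2(G/H)}^2 \ge A\,\norm{g}_{L^2(G)}^2$, which together with the upper bound above is precisely \eqref{eq:tranframe} for $g$. This completes the argument.

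The only genuinely delicate point is the bookkeeping around the index set $G/H$, which need not be countable: one must be sure the Fatou step and the $\ell^2$-limit step are phrased for counting measure on an arbitrary set. This is harmless, since a sequence satisfying a Bessel bound has only countably many nonzero inner products against any fixed vector, so all the sums in sight are effectively countable; but it is the one place where extra care is warranted relative to the classical Euclidean argument.
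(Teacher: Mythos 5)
Your proof is correct; the paper in fact omits the argument entirely, remarking only that the lemma ``is proved by a routine argument in real analysis,'' and your combination of Fatou's lemma for the upper bound with the resulting Lipschitz continuity of the analysis operator for the lower bound is precisely that routine argument. Your closing caution about the possible uncountability of $G/H$ is also well placed and consistent with Remark~\ref{rem:countable}, since the Bessel bound forces all the sums involved to have only countably many nonzero terms.
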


\begin{lem}
\label{lem:54}
Let $f \in  L^2(G)$ and define $g_F = \sum_{[x]\in F}\,c_{[x]} \tau_{[x],\Ccal}\,f$ and
\[
    \Theta_F(\eta) = \sum_{[x]\in F}\,c_{[x]}\, \overline{([x], \eta)}, \quad \eta \in H^{\perp},
\]
where $F \subseteq G/H, {\rm card}(F) < \infty$, and $c_{[x]} = c_{[x]}(g_F) \in \CC$.
Assume $\Phi_{\Ccal}(f) \in L^2(H^{\perp})$. Then, $g_F \in L^2(G), \Theta_F \in L^{\infty}(H^{\perp})$, and
\begin{equation}
\label{eq:22}
         \sum_{[x]\in G/H}\, |\langle g_F, \tau_{[x],\Ccal}\,f \rangle_{L^2(G)}|^2
         = \int_{H^{\perp}}\,|\Theta_F(\eta)|^2\,\Phi_{\Ccal}(f)(\eta)^2\,d\eta < \infty.
\end{equation}
\end{lem}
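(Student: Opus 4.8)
The plan is to mimic the computation in the proof of Lemma~\ref{lem:53}, but now starting from the inner products $\langle g_F, \tau_{[x],\Ccal}\,f\rangle_{L^2(G)}$ and squaring and summing over $[x] \in G/H$, rather than computing $\|g_F\|_{L^2(G)}^2$ directly. The first step is to record, exactly as in Proposition~\ref{prop:bessel}, part~\textit{ii}, the identity
\[
    \langle g_F, \tau_{[x],\Ccal}\,f \rangle_{L^2(G)}
    = \int_{H^{\perp}}\,(x,\eta)\,\Phi_{\Ccal,f}(g_F)(\eta)\,d\eta,
\]
so that these inner products are precisely the Fourier coefficients of $\Phi_{\Ccal,f}(g_F)$ with respect to the orthonormal basis $\{([x],\cdot) : [x]\in G/H\}$ of $L^2(H^{\perp})$ described in Remark~\ref{rem:ortho}. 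The second step is to compute $\Phi_{\Ccal,f}(g_F)$ explicitly: since $\widehat{g_F}(\eta+\sigma) = \Theta_F(\eta)\,\widehat{f}(\eta+\sigma)$ by the translation-modulation property \eqref{eq:transmod} (here $\Theta_F$ pulls out of the $\sigma$-sum because it depends only on $\eta$), we get
\[
    \Phi_{\Ccal,f}(g_F)(\eta) = \sum_{\sigma\in\Ccal}\Theta_F(\eta)\,\widehat{f}(\eta+\sigma)\,\overline{\widehat{f}(\eta+\sigma)}
    = \Theta_F(\eta)\,\Phi_{\Ccal}(f)(\eta).
\]
Since $\Theta_F \in L^{\infty}(H^{\perp})$ (a finite trigonometric sum) and $\Phi_{\Ccal}(f) \in L^2(H^{\perp})$ by hypothesis, the product $\Theta_F \cdot \Phi_{\Ccal}(f)$ lies in $L^2(H^{\perp})$.

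The third step is then to invoke Parseval's identity for the orthonormal basis $\{([x],\cdot)\}$ in $L^2(H^{\perp})$: because $\Theta_F\,\Phi_{\Ccal}(f) \in L^2(H^{\perp})$, its $L^2$-norm equals the $\ell^2$-norm of its Fourier coefficients, giving
\[
    \sum_{[x]\in G/H}\, |\langle g_F, \tau_{[x],\Ccal}\,f \rangle_{L^2(G)}|^2
    = \sum_{[x]\in G/H}\,\Big| \int_{H^{\perp}} (x,\eta)\,\Theta_F(\eta)\,\Phi_{\Ccal}(f)(\eta)\,d\eta\Big|^2
    = \int_{H^{\perp}} |\Theta_F(\eta)|^2\,\Phi_{\Ccal}(f)(\eta)^2\,d\eta,
\]
which is \eqref{eq:22}; finiteness is immediate from the $L^2$ membership just established. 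One should be a little careful that $\Phi_{\Ccal}(f)$ is real and nonnegative a.e.\ (being $\sum_\sigma |\widehat{f}(\eta+\sigma)|^2$), so $\overline{\Phi_{\Ccal}(f)} = \Phi_{\Ccal}(f)$ and $|\Phi_{\Ccal}(f)|^2 = \Phi_{\Ccal}(f)^2$, which is why no absolute value or conjugate bar is needed on the $\Phi_{\Ccal}(f)^2$ factor; this is the only point where the hypothesis that we are pairing $g_F$ against translates of $f$ itself (rather than of some other function) is used.

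The main obstacle is not any single hard estimate but rather the bookkeeping needed to justify interchanging the sum over $[x]$ with the integral over $H^{\perp}$ and to apply Parseval in the possibly-uncountable setting flagged in Remark~\ref{rem:countable}; the clean way around this is to note that $\Theta_F\,\Phi_{\Ccal}(f)\in L^2(H^{\perp})$ from the outset, so that the left-hand sum of \eqref{eq:22} is by definition its squared Fourier-coefficient sum and Parseval applies verbatim, exactly as the $L^2$-norm convergence of Fourier series theorem is used in part~\textit{iii} of the proof of Proposition~\ref{prop:bessel}. The assertions $g_F \in L^2(G)$ and $\Theta_F \in L^{\infty}(H^{\perp})$ are the same trivialities already noted in Lemma~\ref{lem:53} and require nothing new.
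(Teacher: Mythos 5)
Your proof is correct, and it takes a genuinely more streamlined route than the paper's. You observe that, by the identity established in part \textit{ii} of the proof of Proposition \ref{prop:bessel} (which is valid for arbitrary $g\in L^2(G)$, not only under the Bessel hypothesis), the inner products $\langle g_F,\tau_{[x],\Ccal}\,f\rangle_{L^2(G)}$ are exactly the Fourier coefficients of $\Phi_{\Ccal,f}(g_F)$; that $\Phi_{\Ccal,f}(g_F)=\Theta_F\,\Phi_{\Ccal}(f)$ by the translation--modulation property \eqref{eq:transmod}; and that this product lies in $L^2(H^{\perp})$ under the stated hypothesis, so that Parseval's identity for the orthonormal basis $\{([x],\cdot):[x]\in G/H\}$ of Remark \ref{rem:ortho} yields \eqref{eq:22} at once. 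The paper instead carries out the computation by hand: it expands $\sum_{[x]\in K}|\langle g_F,\tau_{[x],\Ccal}\,f\rangle|^2$ for finite $K\subseteq G/H$ as a double sum over $[y],[z]\in F$, recognizes the inner integral as the $K$-th partial Fourier sum $S_K\big(\Phi_{\Ccal}(f)(\cdot)\,(z,\cdot)\big)$, and then passes to the limit over $K$ using Cauchy--Schwarz together with the $L^2(H^{\perp})$-norm convergence of Fourier series --- which is precisely where the hypothesis $\Phi_{\Ccal}(f)\in L^2(H^{\perp})$ enters, just as it enters your argument to place $\Theta_F\,\Phi_{\Ccal}(f)$ in $L^2(H^{\perp})$. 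The two arguments rest on the same two ingredients (the inner-product formula and the completeness of the characters of $H^{\perp}$), but yours invokes the second as Parseval rather than re-deriving it via partial sums and a limiting argument, and thereby also avoids most of the Fubini--Tonelli bookkeeping of the paper's parts \textit{ii}--\textit{iii}. Your side remarks --- that $\Phi_{\Ccal}(f)\geq 0$ so no conjugate or absolute value is needed on the $\Phi_{\Ccal}(f)^2$ factor, and that Parseval for an orthonormal basis handles the possibly uncountable index set $G/H$ since only countably many coefficients are nonzero --- correctly address the only delicate points.
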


\begin{proof}
{\it i.} We first estimate the $L^2$-norm of $g_F$,
using Minkowski's inequality and then the Plancherel formula,
taking into account that $w_{[x],\Ccal} $ is unimodular:

\begin{align*}
      \norm{g_F}_{L^2(G)}
      & =
      \Bigg( \int_G\,\bigg( \sum_{[x]\in F}\, c_{[x]}\, \tau_{[x],\Ccal}\,f(y) \bigg)\,
      \bigg( \sum_{[x]\in F}\, \overline{c_{[x]}}\, \overline{\tau_{[x],\Ccal}\,f(y)}  \bigg)\,dy  \Bigg)^{\frac{1}{2}}
      \\
      & \leq \sum_{[x]\in F}\,\bigg( \int_G\,  \big|c_{[x]}\, \tau_{[x],\Ccal}\,f(y)\big|^2 \, dy  \bigg)^{\frac{1}{2}}
      \\
      & = 
      \sum_{[x]\in F}\, \big|c_{[x]}\big| \,\bigg( \int_G\,  \big|  f \ast w_{[x],\Ccal}^{\vee}(y)   \big|^2 \, dy  \bigg)^{\frac{1}{2}}
      \\
      & =
       \sum_{[x]\in F}\, \big|c_{[x]}\big| \,\bigg( \int_{\widehat{G}}\, 
       \big|  \widehat{f}(\gamma)\,w_{[x],\Ccal}(\gamma)   \big|^2 \, d\gamma  \bigg)^{\frac{1}{2}}
      \\
      & =  
      \norm{f}_{L^2(G)}\,\Big( \sum_{[x]\in F}\,\big|c_{[x]}\big|\Big) < \infty.
\end{align*}
In particular, $g_F  \in L^2(G)$, as we knew from its definition and Lemma \ref{lem:53}.

{\it ii.} Let $K \subseteq G/H$ satisfy ${\rm card}\,(K) < \infty$.  By the Parseval-Plancherel formula
we compute 
\begin{align*}
   0 \leq \sum_{[x]\in K \subseteq G/H}\, |\langle g_F, \tau_{[x],\Ccal}\,f \rangle_{L^2(G)}|^2
   & =  \sum_{[x]\in K}\, \langle g_F, \tau_{[x],\Ccal}\,f \rangle_{L^2(G)} \, \overline{\langle g_F, \tau_{[x],\Ccal}\,f \rangle_{L^2(G)}}
   \\
   & = \sum_{[x]\in K}\,\int_G\, g_F(y)\,\overline{f \ast w_{[x],\Ccal}^{\vee}(y)} dy\, \int_G\, 
   \overline{g_F(y)\, \overline{f \ast w_{[x],\Ccal}^{\vee}(y)}  }\,dy
   \\
   & = \sum_{[x]\in K}\, \int_{\widehat{G}} \,\widehat{g_F}(\gamma)\, \overline{\widehat{f}(\gamma)\,\overline{(x, \eta_{\gamma})}}\,d{\gamma}\,
   \overline{\int_{\widehat{G}}\,\widehat{g_F}(\gamma)  \,\overline{\widehat{f}(\gamma)\,\overline{(x, \,\eta_{\gamma}} )}d{\gamma}} 
\end{align*}
\begin{equation}
\label{eq:cosrep}
=
   \sum_{[y],[z]\in F}\,c_{[y]}\,\overline{c_{[z]}}\,\int_{\widehat{G}}\,\big| \widehat{f}(\gamma)\big|^2\,\overline{(y,\eta_{\gamma})}   
   \, \Bigg(\int_{\widehat{G}} \big|\widehat{f}(\lambda)\big|^2\,(z,\eta_{\lambda}) \bigg(\sum_{[x]\in K}\,(x, \eta_{\gamma} -\eta_{\lambda})\bigg)  
   \,d\lambda \Bigg)\,d\gamma,
\end{equation}
where
\[
     \widehat{g_F}(\gamma) = \sum_{[y]\in F}\,c_{[y]}\,\overline{(y, \eta_{\gamma})}\, \widehat{f}(\gamma), \quad \gamma \in \widehat{G},
\]
and $\gamma - \eta_{\gamma} = \sigma_{\gamma} \in \Ccal$ is the unique representation of each  $\gamma \in \widehat{G}$.

We now rewrite \eqref{eq:cosrep} in terms of integration over $H^{\perp}$. To this end, we begin by computing
the inner integral of \eqref{eq:cosrep} as follows using the fact that each $\lambda \in \widehat{G}$ has the unique
representation $\lambda = \sigma + \eta_{\lambda}$ for some $\sigma \in \Ccal$ and some $\eta_{\lambda} \in H^{\perp}$.
\begin{align*}
  \int_{\widehat{G}} \big|\widehat{f}(\lambda)\big|^2\,(z,\eta_{\lambda}) \bigg(\sum_{[x]\in K}\,(x, \eta_{\gamma} -\eta_{\lambda})\bigg)  
   \,d\lambda 
   & =
   \sum_{\sigma \in \Ccal}\,\int_{\sigma + H^{\perp}}\,
   \big|\widehat{f}(\lambda)\big|^2\,(z,\eta_{\lambda}) \bigg(\sum_{[x]\in K}\,(x, \eta_{\gamma} -\eta_{\lambda})\bigg) \,d\lambda
   \\
   & = 
   \sum_{\sigma \in \Ccal}\,\int_{H^{\perp}}\,
   \big|\widehat{f}(\sigma + \eta)\big|^2\,(z,\eta) \bigg(\sum_{[x]\in K}\,(x, \eta_{\gamma} -\eta)\bigg) \,d\eta
   \\
   & =
   \int_{H^{\perp}}\, \sum_{\sigma \in \Ccal}\,
   \big|\widehat{f}(\sigma + \eta)\big|^2\,(z,\eta) \bigg(\sum_{[x]\in K}\,(x, \eta_{\gamma} -\eta)\bigg) \,d{\eta}, 
\end{align*}
where $d{\lambda}$ denotes Haar measure on $\widehat{G}$ and 
$d{\eta}$ denotes Haar measure on $H^{\perp}$. The last equality also follows
from the Fubini-Tonelli theorem, actually Tonelli's theorem \cite[page 138]{BenCza2009} 
in this case, since
$(z,\eta) \big(\sum_{[x]\in K}\,(x, \eta_{\gamma} -\eta)\big) \in L^{\infty}(H^{\perp})$
and $\Phi_{\Ccal}(f) \in L^1(H^{\perp})$, see Subsection \ref{sec:spanspecsym}.
Thus, we have
\begin{equation}
\label{eq:fourser1}
\int_{\widehat{G}} \big|\widehat{f}(\lambda)\big|^2\,(z,\eta_{\lambda}) \bigg(\sum_{[x]\in K}\,(x, \eta_{\gamma} -\eta_{\lambda})\bigg)  
   \,d\lambda
    =  \int_{H^{\perp}}\, \Phi_{\Ccal}(f)(\eta)
   \,(z,\eta) \bigg(\sum_{[x]\in K}\,(x, \eta_{\gamma} -\eta)\bigg) \,d{\eta},
\end{equation}
and, with a similar calculation for its outer integral, \eqref{eq:cosrep} becomes
\[
 \sum_{[y],[z]\in F}\,c_{[y]}\,\overline{c_{[z]}}\,\Bigg( \sum_{\sigma \in \Ccal}\, \int_{\sigma + H^{\perp}}\,
    \bigg( \big| \widehat{f}(\gamma)\big|^2\,\overline{(y,\eta_{\gamma})} 
    \Big(  \int_{H^{\perp}}\, \Phi_{\Ccal}(f)(\eta)
   \,(z,\eta) \Big(\sum_{[x]\in K}\,(x, \eta_{\gamma} -\eta)\Big) \,d{\eta} \Big) \bigg)\,d\gamma \Bigg) \\
\]
\[
   =
 \sum_{[y],[z]\in F}\,c_{[y]}\,\overline{c_{[z]}}\,\Bigg( \sum_{\sigma \in \Ccal}\, \int_{H^{\perp}}\,
     \bigg(\big| \widehat{f}(\sigma + \nu)\big|^2\,\overline{(y,\nu)} 
     \Big( \int_{H^{\perp}}\, \Phi_{\Ccal}(f)(\eta)
   \,(z,\eta) \Big(\sum_{[x]\in K}\,(x, \nu -\eta)\Big) \,d{\eta} \Big)\bigg)\,d\nu \Bigg)
     \\
 \]
 \[
 =  
      \sum_{[y],[z]\in F}\,c_{[y]}\,\overline{c_{[z]}}\, \int_{H^{\perp}}\,\Bigg(\sum_{\sigma \in \Ccal}\, 
     \big| \widehat{f}(\sigma + \nu) \big|^2\,\overline{(y,\nu)} \,
     \bigg(\int_{H^{\perp}}\, \Phi_{\Ccal}(f)(\eta)
   \,(z,\eta) \Big(\sum_{[x]\in K}\,(x, \nu -\eta)\Big) \,d{\eta} \bigg)\Bigg)\,d\nu
     \\
\]
\begin{equation}
\label{eq:phiphi}
=  
      \sum_{[y],[z]\in F}\,c_{[y]}\,\overline{c_{[z]}}\, \int_{H^{\perp}}\,\Bigg(\Phi_{\Ccal}(f)(\nu)
     \,\overline{(y,\nu)} \,
     \bigg(\!\!\int_{H^{\perp}}\, \Phi_{\Ccal}(f)(\eta)
   \,(z,\eta) \Big(\sum_{[x]\in K}\,(x, \nu -\eta)\Big) \,d{\eta} \bigg)\Bigg)\,d\nu,
\end{equation}
where $d{\gamma}$ denotes Haar measure on $\widehat{G}$, and 
$d{\eta}$ and $d{\nu}$ denote Haar measure on $H^{\perp}$.

{\it iii.} Now let us consider the right side of the inner integral \eqref{eq:fourser1}
in terms of Fourier series. After the previous calculation, where Haar measure
$d\nu$ was introduced, this right side is
\begin{align*}
   \int_{H^{\perp}}\, \Phi_{\Ccal}(f)(\eta)
   \,(z,\eta) \Bigg(\sum_{[x]\in K}\,(x, \nu -\eta) \Bigg) \,d{\eta}
    &= \sum_{[x]\in K}\, \bigg(\int_{H^{\perp}}\, \Phi_{\Ccal}(f)(\eta)
   \,(z - x,\eta) \, d{\eta}\bigg)\,(x, \nu)
   \\
   &= \sum_{[x]\in K}\, \bigg(\int_{H^{\perp}}\, \Big( \Phi_{\Ccal}(f)(\eta)
   \,(z,\eta) \Big) \, \overline{(x,\eta)} d{\eta}\bigg)\,(x, \nu),
\end{align*}
which is precisely 
the $K$-th partial Fourier series sum,
\[
      S_K \big( \Phi_{\Ccal}(f)(\cdot)\,(z,\cdot) \big)(\nu),
\]
of the function $\Phi_{\Ccal}(f)(\cdot)\,(z,\cdot) \in L^1(H^{\perp})$
with Fourier coefficients, 
\[
     \int_{H^{\perp}}\, \Big( \Phi_{\Ccal}(f)(\eta)
   \,(z,\eta) \Big) \, \overline{(x,\eta)} d{\eta}, \quad [x] \in K \subseteq G/H.
\]
   
Combining this calculation and notation with those of parts {\it i} and {\it ii},
we have shown that
\begin{equation}
\label{eq:fourser2}
 0 \leq  \sum_{[x]\in K \subseteq G/H}\, |\langle g_F, \tau_{[x],\Ccal}\,f \rangle_{L^2(G)}|^2
  = \sum_{[y],[z]\in F}\,c_{[y]}\,\overline{c_{[z]}}\, \int_{H^{\perp}}\,\Phi_{\Ccal}(f)(\nu)
     \,\overline{(y,\nu)} \,S_K \big( \Phi_{\Ccal}(f)(\cdot)\,(z,\cdot) \big)(\nu) \, d\nu.
\end{equation}

{\it iv.} We shall now prove that 
\begin{equation}
\label{eq:fourser3}
  {\rm lim}_K \,\int_{H^{\perp}} \Phi_{\Ccal}(f)(\nu)
     \,\overline{(y,\nu)} \,S_K \big( \Phi_{\Ccal}(f)(\cdot)\,(z,\cdot) \big)(\nu) \, d\nu
     =  \int_{H^{\perp}}\,\Phi_{\Ccal}(f)^{2}(\nu)\, (z-y,\nu)\,d\nu,
\end{equation}
where ${\rm lim}_K$ denotes the limit as the finite subsets $K \subseteq G/H$
increase to all of the discrete group $G/H$, see Remark \ref{rem:countable} that obviates
countability concerns. In fact, the right side of \eqref{eq:fourser3} exists since 
$\Phi_{\Ccal}(f) \in L^2(H^{\perp})$, and we have the inequality,
\begin{align*}
   \bigg |\int_{H^{\perp}}\,\Phi_{\Ccal}(f)(\nu) &
     \,\overline{(y,\nu)} \,\Big(S_K \big( \Phi_{\Ccal}(f)(\cdot)\,(z,\cdot) \big)(\nu) 
     -  \Phi_{\Ccal}(f)(\nu)\,(z,\nu) \Big)\, d\nu  \bigg|
\\
& \leq \norm{\Phi_{\Ccal}(f)}_{L^2(H^{\perp})}\,
        \big\| S_K \big( \Phi_{\Ccal}(f)(\cdot)\,(z,\cdot) \big)(\nu) 
     -  \Phi_{\Ccal}(f)(\nu)\,(z,\nu)\big\|_{L^2(H^{\perp})}.
\end{align*}
The right side of this inequality tends to $0$ as $K$ increases to $G/H$ by
the elementary $L^2(H^{\perp})$-norm convergence of Fourier series
on the compact abelian group $H^{\perp}$.

{\it v.} Clearly, with non-negative terms in the sum, the limit,
\[
    {\rm lim}_K \,\sum_{[x]\in K \subseteq G/H}\, |\langle g_F, \tau_{[x],\Ccal}\,f \rangle_{L^2(G)}|^2,
\]
 exists and equals the left side of \eqref{eq:22}. Consequently, \eqref{eq:fourser2} and 
 \eqref{eq:fourser3} combine to give
\begin{align*}
    \sum_{[x]\in \subseteq G/H}\, |\langle g_F, \tau_{[x],\Ccal}\,f \rangle_{L^2(G)}|^2
     & =
     \sum_{[y],[z]\in F}\,c_{[y]}\,\overline{c_{[z]}}\,\int_{H^{\perp}}\,\Phi_{\Ccal}(f)^{2}(\nu)\, (z-y,\nu)\,d\nu
     \\
     & =
      \int_{H^{\perp}}\,|\Theta_F(\gamma)|^2\,\Phi_{\Ccal}(f)(\gamma)^2\,d\gamma < \infty,
\end{align*}
and this is \eqref{eq:22}, the desired result.
\end{proof}

\begin{lem}
\label{lem:55}
Let $f \in  L^2(G)$ and assume $\Phi_{\Ccal}(f) \in L^2(H^{\perp})$.
The sequence, $\{\tau_{[x],\Ccal}\,f : [x]\in G/H \}$, is a frame for $V_{\Ccal,f}$
with frame bounds $A$ and $B$ if and only if there are positive constants
$A$ and $B$ such that for all trigonometric polynomials,
\[
    \Theta_F(\eta) = \sum_{[x]\in F}\,c_{[x]}\, \overline{([x], \eta)}, \quad \eta \in H^{\perp},
\]
where $F \subseteq G/H, {\rm card}(F) < \infty$, and $c_{[x]} \in \CC$, we have
\begin{multline}
\label{eq:ineq12}
     A\,\int_{H^{\perp}}\,|\Theta_F(\eta)|^2\,\Phi_{\Ccal}(f)(\eta)\,d\eta \leq
     \int_{H^{\perp}}\,|\Theta_F(\eta)|^2\,\Phi_{\Ccal}(f)(\eta)^2\,d\eta
\\
     \leq B\,\int_{H^{\perp}}\,|\Theta_F(\eta)|^2\,\Phi_{\Ccal}(f)(\eta)\,d\eta < \infty.
\end{multline}
\end{lem}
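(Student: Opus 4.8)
The plan is to front-load everything into the two computational lemmas that precede this statement and then argue purely by substitution. First I would apply Lemma \ref{lem:52} to replace the requirement that the frame inequality \eqref{eq:tranframe} hold for every $g \in V_{\Ccal,f}$ by the equivalent requirement that it hold for every $g$ in the (not necessarily closed) span of $\{\tau_{[x],\Ccal}\,f : [x]\in G/H\}$; that lemma is stated precisely for this purpose, so nothing new is needed here.

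Next I would record the elementary bookkeeping that every element of that span has the form $g_F = \sum_{[x]\in F} c_{[x]}\,\tau_{[x],\Ccal}\,f$ for some finite $F \subseteq G/H$ and scalars $c_{[x]}\in\CC$, and that as $(F,\{c_{[x]}\})$ ranges over all such data the associated trigonometric polynomial $\Theta_F$ ranges over exactly the set of all trigonometric polynomials on $H^{\perp}$ in the characters $\{([x],\cdot):[x]\in G/H\}$. Hence the quantifier ``for all $g$ in the span'' and the quantifier ``for all trigonometric polynomials $\Theta_F$'' are interchangeable, once both sides of \eqref{eq:tranframe} have been rewritten through $\Theta_F$ and $\Phi_{\Ccal}(f)$.

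The core step is then just plugging in. For a fixed $g_F$, Lemma \ref{lem:53} supplies $\norm{g_F}_{L^2(G)}^2 = \int_{H^{\perp}}|\Theta_F(\eta)|^2\,\Phi_{\Ccal}(f)(\eta)\,d\eta$, and, invoking the standing hypothesis $\Phi_{\Ccal}(f)\in L^2(H^{\perp})$, Lemma \ref{lem:54} supplies $\sum_{[x]\in G/H}|\langle g_F,\tau_{[x],\Ccal}\,f\rangle_{L^2(G)}|^2 = \int_{H^{\perp}}|\Theta_F(\eta)|^2\,\Phi_{\Ccal}(f)(\eta)^2\,d\eta$, both finite. Substituting these two identities into \eqref{eq:tranframe} written for $g = g_F$ turns it verbatim into \eqref{eq:ineq12}, the finiteness at the right end of \eqref{eq:ineq12} being exactly the finiteness assertion of Lemma \ref{lem:53}. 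Running this equivalence in both directions, combined with the quantifier interchange and Lemma \ref{lem:52}, yields the lemma.

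I do not expect a genuine obstacle, since all the analysis has already been done in Lemmas \ref{lem:52}--\ref{lem:54}. The only points needing a moment's care are housekeeping ones: in the degenerate case $g_F = 0$ (equivalently $\int_{H^{\perp}}|\Theta_F|^2\,\Phi_{\Ccal}(f)\,d\eta = 0$) both \eqref{eq:tranframe} and \eqref{eq:ineq12} hold trivially, so this case neither obstructs the equivalence nor threatens the positivity of $A$ and $B$; and one should note that the lemma is asserting that $A$ and $B$ are admissible frame constants, which is precisely what the substitution delivers, the optimal bounds then being the corresponding supremum and infimum and requiring no separate argument.
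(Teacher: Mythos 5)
Your proposal is correct and follows essentially the same route as the paper: both directions are obtained by substituting the identities of Lemmas \ref{lem:53} and \ref{lem:54} into the frame inequality for $g_F$, with Lemma \ref{lem:52} passing between the span and its closure $V_{\Ccal,f}$. The only cosmetic difference is that you invoke Lemma \ref{lem:52} symmetrically for both implications, whereas the paper needs it only for $(\Leftarrow)$; this changes nothing.
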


\begin{proof}$\,(\Rightarrow)$ Define
\[
    g_F = \sum_{[x]\in F}\,c_{[x]} (\tau_{[x],\Ccal}\,f) \in V_{\Ccal,f} \subseteq  L^2(G).
\]
By Lemma \ref{lem:53}, 
\begin{equation}
\label{eq:1}
    \norm{g_F}_{L^2(G)}^2 = \int_{H^{\perp}}\,|\Theta_F(\eta)|^2\,\Phi_{\Ccal}(f)(\eta)\,d\eta;
\end{equation}
and, by Lemma \ref{lem:54},
\begin{equation}
\label{eq:2}
    \sum_{[x]\in G/H}\, |\langle g_F, \tau_{[x],\Ccal}\,f \rangle_{L^2(G)}|^2 = \int_{H^{\perp}}\,|\Theta_F(\eta)|^2\,
     \Phi_{\Ccal}(f)(\eta)^2\,d\eta.
\end{equation}
With the frame assumption, and using equations \eqref{eq:1} and \eqref{eq:2}, we compute
\begin{align*}
    A\,\int_{H^{\perp}}\,|\Theta_F  (\eta)|^2  \,\Phi_{\Ccal}(f)(\eta)\,d\eta
    &= A\, \norm{g_F}_{L^2(G)}^2 
    \leq  \sum_{[x]\in G/H}\, |\langle g_F, \tau_{[x],\Ccal}\,f \rangle_{L^2(G)} |^2
\\
   &= \int_{H^{\perp}}\,|\Theta_F(\eta)|^2\,\Phi_{\Ccal}(f)(\eta)^2\,d\eta
   \leq  \sum_{[x]\in G/H}\, |\langle g_F, \tau_{[x],\Ccal}\,f \rangle_{L^2(G)}|^2
\\
     &\leq B\, \norm{g_F}_{L^2(G)}^2 = B\,\int_{H^{\perp}}\,
    |\Theta_F(\eta)|^2\,\Phi_{\Ccal}(f)(\eta)\,d\eta,
\end{align*}
and this is \eqref{eq:ineq12}.

$(\Leftarrow)$ Because of the equalities of Lemmas \ref{lem:53} and \ref{lem:54}, and
by the definition of $g_F$, we can rewrite 
our assumption \eqref{eq:ineq12} as
\begin{equation}
\label{eq:span}
      A\,\norm{g}_{L^2(G)}^2
     \leq  \sum_{[x]\in G/H}\, |\langle g, \tau_{[x],\Ccal}\,f \rangle_{L^2(G)}|^2 \leq B\,\norm{g}_{L^2(G)}^2
\end{equation} 
for all $g \in {\rm span}\{\tau_{[x],\Ccal}\,f) : [x] \in G/H \}$. We then apply Lemma \ref{lem:52}
directly to obtain \eqref{eq:span} for all $g \in V_{\Ccal,f}$, the desired conclusion.
\end{proof}

%%%%%%%%%%%%%%%

\subsection{Characterization of frames of translates}
\label{sec:characterization}

The basic idea behind our 
proof of Theorem \ref{thm:char} is to integrate over the compact group $H^{\perp}$ instead of the section
$[0, 1)^d$, and then to sum over the section $\Ccal$ instead of the discrete subgroup $\ZZ^d$. This involves
the design of the
correct definition of translation, that we did in Subsection \ref{sec:trans}.
Thus, we switch from defining $\Phi$ over a discrete subgroup,
to defining $\Phi_{\Ccal}$ over a set of coset representatives; and then we integrate over 
the compact group $H^{\perp}$ instead of
a set of 
coset representatives, which for the case of $\RR^d$ can be the set $[0,1)^d$. 
Thus, in the case of our Euclidean Theorem \ref{thm:benli}, we integrated over the
torus group $\RR^d/\ZZ^d$, thought of as  periodization of $[0,1)^d$, which could obfuscate the fact
that we were really integrating over the section $[0, 1)^d$.

\begin{thm}
\label{thm:char}
Let $G$ be a LCAG with compact open subgroup $H$, and let
 $\Ccal$ be a set of coset representatives of $\widehat{G}/H^{\perp}$. Let $f \in L^2(G)$.
The sequence, $\{\tau_{[x],\Ccal}\,f : [x] \in G/H \}$, is a frame for $V_{\Ccal,f}$ with
frame bounds $A$ and $B$
if and only if
\begin{equation}
\label{eq:bdprop}
     \exists\, A, B > 0 \; {\rm such}\, {\rm that} \quad A \leq \Phi_{\Ccal}(f) \leq B \; {\rm on}\;
     H^{\perp} \setminus N,
\end{equation}
where $N = \{ \eta \in H^{\perp} : \Phi_{\Ccal}(f)(\eta) = 0\}$ and $N$ is defined up to sets of measure $0$.
\end{thm}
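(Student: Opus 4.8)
The plan is to reduce the frame characterization to the inequality \eqref{eq:ineq12} of Lemma~\ref{lem:55}, and then to verify that \eqref{eq:ineq12} holds for all trigonometric polynomials $\Theta_F$ if and only if the pointwise bound \eqref{eq:bdprop} holds. The first order of business is to handle the two directions of the biconditional in a way that lets us invoke the lemmas. Going from the frame condition to \eqref{eq:bdprop}: if $\{\tau_{[x],\Ccal}f\}$ is a frame for $V_{\Ccal,f}$, then in particular it is a Bessel sequence, so Proposition~\ref{prop:bessel} applies and gives $\Phi_{\Ccal}(f)\in L^2(H^{\perp})$; hence Lemma~\ref{lem:55} is available, and the frame condition yields \eqref{eq:ineq12} for every $\Theta_F$. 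Conversely, if \eqref{eq:bdprop} holds then $\Phi_{\Ccal}(f)$ is bounded on $H^{\perp}\setminus N$ and vanishes on $N$, so $\Phi_{\Ccal}(f)\in L^2(H^{\perp})$ automatically (it is bounded on a finite-measure space), Lemma~\ref{lem:55} applies, and it suffices to deduce \eqref{eq:ineq12} from \eqref{eq:bdprop}. So the whole theorem comes down to the equivalence of \eqref{eq:bdprop} with ``\eqref{eq:ineq12} for all $\Theta_F$,'' under the standing assumption $\Phi_{\Ccal}(f)\in L^2(H^{\perp})$.

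For the easy direction of that equivalence, assume \eqref{eq:bdprop}. On $H^{\perp}\setminus N$ we have $A\,\Phi_{\Ccal}(f)\le \Phi_{\Ccal}(f)^2\le B\,\Phi_{\Ccal}(f)$ pointwise, while on $N$ both $\Phi_{\Ccal}(f)$ and $\Phi_{\Ccal}(f)^2$ vanish; multiplying through by $|\Theta_F|^2\ge 0$ and integrating over $H^{\perp}$ gives \eqref{eq:ineq12} directly, with finiteness coming from $\Phi_{\Ccal}(f)\in L^1(H^{\perp})$ and $\Theta_F\in L^{\infty}(H^{\perp})$. For the reverse direction, assume \eqref{eq:ineq12} holds for every $\Theta_F$ and we must recover the pointwise bounds. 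The key analytic input is that the trigonometric polynomials $\Theta_F$ are dense in $L^2(H^{\perp})$ (Remark~\ref{rem:ortho}); writing $\psi=\Phi_{\Ccal}(f)$, the inequalities say $\int |\Theta_F|^2(\psi^2-A\psi)\,d\eta\ge 0$ and $\int |\Theta_F|^2(B\psi-\psi^2)\,d\eta\ge 0$ for all $\Theta_F$. I would argue that the set $\{|\Theta_F|^2\}$ is dense enough in the cone of nonnegative $L^1$ functions — more precisely, approximate $|\Theta_F|^2$ by nonnegative continuous functions, and conversely any nonnegative $\varphi\in C(H^{\perp})$ is a uniform limit of $|\Theta_F|^2$ since $\sqrt{\varphi}$ is continuous and hence uniformly approximable by trigonometric polynomials, so $|\Theta_F|^2\to\varphi$ uniformly. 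This forces $\int \varphi\,(\psi^2-A\psi)\,d\eta\ge 0$ for all nonnegative $\varphi\in C(H^{\perp})$, hence $\psi^2-A\psi\ge 0$ a.e., i.e. $\psi\ge A$ on $\{\psi>0\}=H^{\perp}\setminus N$; symmetrically $\psi\le B$ on $H^{\perp}\setminus N$. That is exactly \eqref{eq:bdprop}.

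There is one subtlety to address carefully: producing the constants $A$ and $B$ and keeping the ``frame bounds'' bookkeeping honest. In the $(\Leftarrow)$ direction we are handed the existence of \emph{some} $A,B>0$ with the pointwise bound; Lemma~\ref{lem:55} then delivers that $\{\tau_{[x],\Ccal}f\}$ is a frame with those same bounds $A,B$. In the $(\Rightarrow)$ direction the frame bounds $A,B$ propagate through Lemma~\ref{lem:55} into \eqref{eq:ineq12} and then into \eqref{eq:bdprop}; one should note that the optimal (supremum/infimum) frame constants match the essential infimum/supremum of $\Phi_{\Ccal}(f)$ over $H^{\perp}\setminus N$, which follows from the density argument above applied with the sharp constants. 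I would state this matching of optimal constants explicitly, since the theorem asserts the equivalence ``with frame bounds $A$ and $B$.''

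The step I expect to be the main obstacle is the reverse implication within the equivalence of \eqref{eq:bdprop} and \eqref{eq:ineq12}: turning the family of integral inequalities over all trigonometric polynomials into a genuine pointwise (a.e.) statement about $\Phi_{\Ccal}(f)$. The naive approach — localizing $|\Theta_F|^2$ near a point — is awkward on a general compact abelian group with no canonical ``bump functions,'' so the clean route is the Stone--Weierstrass/uniform-approximation argument sketched above, reducing to: nonnegative continuous functions are uniform limits of squared moduli of trigonometric polynomials (via continuity of the square root), and nonnegative $L^1$ test functions can be approximated in $L^1$ by nonnegative continuous ones. Once that is in place, the rest is the routine measure-theoretic conclusion that $\int \varphi\, h\,d\eta\ge 0$ for all nonnegative continuous $\varphi$ implies $h\ge 0$ a.e. Care is also needed that all integrals in sight are finite throughout the manipulation, which is guaranteed by $\Phi_{\Ccal}(f)\in L^2(H^{\perp})\subseteq L^1(H^{\perp})$ (using $\nu(H^{\perp})=1$) together with $\Theta_F\in L^{\infty}(H^{\perp})$.
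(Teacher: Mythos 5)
Your proposal is correct, and it follows the paper's overall architecture --- reduce via Proposition~\ref{prop:bessel} and Lemma~\ref{lem:55} to the equivalence of \eqref{eq:bdprop} with ``\eqref{eq:ineq12} for all trigonometric polynomials'' --- but it handles the hard (necessity) direction by a genuinely different argument. The paper argues by contradiction: assuming $\Phi_{\Ccal}(f)<A$ on a set $E$ of positive measure, it takes $\Theta=\mathbb{1}_E$, forms $p=\int|\Theta|^2(A\Phi_{\Ccal}(f)-\Phi_{\Ccal}(f)^2)\,d\eta>0$, and then controls the error in replacing $\Theta$ by a nearby trigonometric polynomial $\Psi$ via a weighted-$L^2$ triangle inequality together with the elementary bound $|A\Phi_{\Ccal}(f)-\Phi_{\Ccal}(f)^2|\le A^2/4$ on $E$, so that $\int|\Psi|^2(A\Phi_{\Ccal}(f)-\Phi_{\Ccal}(f)^2)\,d\eta\ge p/2>0$ violates \eqref{eq:ineq12}. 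You instead read \eqref{eq:ineq12} as the positivity of the two signed densities $\Phi_{\Ccal}(f)^2-A\Phi_{\Ccal}(f)$ and $B\Phi_{\Ccal}(f)-\Phi_{\Ccal}(f)^2$ against all test functions $|\Theta_F|^2$, observe via Stone--Weierstrass (Remark~\ref{rem:ortho}) and continuity of $\sqrt{\varphi}$ that every nonnegative $\varphi\in C(H^{\perp})$ is a uniform limit of such $|\Theta_F|^2$, and conclude that both densities are nonnegative a.e., which is exactly \eqref{eq:bdprop} on $\{\Phi_{\Ccal}(f)>0\}=H^{\perp}\setminus N$. Both routes ultimately rest on the same density of trigonometric polynomials; yours is arguably cleaner in that it dispenses with the weighted Hilbert space estimate and treats the lower and upper bounds symmetrically, while the paper's contradiction argument is more self-contained at the final measure-theoretic step. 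Two small points to tighten: (1) in passing from ``$\int\varphi h\,d\eta\ge 0$ for all nonnegative continuous $\varphi$'' to ``$h\ge 0$ a.e.'' with $h=\Phi_{\Ccal}(f)^2-A\Phi_{\Ccal}(f)$ only in $L^1(H^{\perp})$, you should approximate indicator functions by \emph{uniformly bounded} nonnegative continuous functions (using regularity of Haar measure) and apply dominated convergence, rather than approximating general nonnegative $L^1$ test functions in $L^1$-norm, since the latter does not let you pass to the limit against an unbounded $h$; (2) your remark that the optimal frame bounds coincide with the essential infimum and supremum of $\Phi_{\Ccal}(f)$ on $H^{\perp}\setminus N$ is a welcome addition that the paper leaves implicit.
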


\begin{proof}
$(\Leftarrow)$ Assume the inequalities \eqref{eq:bdprop}, and let
\[
    \Theta_F(\eta) = \sum_{[x]\in F}\,c_{[x]}\, \overline{([x], \eta)}, \quad \eta \in H^{\perp},
\]
where $F \subseteq G/H, {\rm card}(F) < \infty$, and $c_{[x]} \in \CC$. $\Theta_F$ is a 
trigonometric polynomial defined on $H^{\perp}$. We shall prove the inequalities
\eqref{eq:ineq12} as follows:
\begin{align*}
   A\,\int_{H^{\perp}}\,|\Theta_F(\eta)|^2\,\Phi_{\Ccal}(f)(\eta)\,d\eta
   &=
   A\,\int_{H^{\perp} \setminus N}\,|\Theta_F(\eta)|^2\,\Phi_{\Ccal}(f)(\eta)\,d\eta
\\
   &\leq
   \int_{H^{\perp} \setminus N}\,|\Theta_F(\eta)|^2\,\Phi_{\Ccal}(f)(\eta)^2\,d\eta
\\
   &\leq
   B  \int_{H^{\perp} \setminus N}\,|\Theta_F(\eta)|^2\,\Phi_{\Ccal}(f)(\eta)\,d\eta
   =
   B  \int_{H^{\perp} }\,|\Theta_F(\eta)|^2\,\Phi_{\Ccal}(f)(\eta)\,d\eta.
\end{align*}
Thus, we obtain the result by Lemma \ref{lem:55}.

$(\Rightarrow)$ {\it i.} Let $\{\tau_{[x],\Ccal}\,f : [x] \in G/H \}$, be a frame for $V_{\Ccal,f}$ with
frame bounds $A$ and $B$. Thus, by Proposition \ref{prop:bessel}, $\Phi_{\Ccal}(f) \in L^2(H^{\perp})$.
Assume $\Phi_{\Ccal}(f) < A$ on some set $E \subseteq H^{\perp} \setminus N$ for which 
$\nu_{H^{\perp}}(E)> 0$.
We shall show that the first inequality of
\eqref{eq:ineq12} fails, thereby obtaining the desired contradiction by Lemma \ref{lem:55}.
A similar contradiction will arise, in this case using the second inequality of \eqref{eq:ineq12}
and Lemma \ref{lem:55}, when we assume $\Phi_{\Ccal}(f) > B$
on some set $E \subseteq H^{\perp} \setminus N$ for which $\nu_{H^{\perp}}(E)> 0$, Thus, we obtain
\eqref{eq:bdprop}.

{\it ii.} By our assumption on $\Phi_{\Ccal}(f)$, we can choose $\Theta \in L^{\infty}(H^{\perp})$
such that $\Theta = 0$ off $E$, $|\Theta| > 0$ on $E$, and
\[
    A\, \int_{H^{\perp}}\,|\Theta(\eta)|^2\,\Phi_{\Ccal}(f)(\eta)\,d\eta
    >  \int_{H^{\perp}}\,|\Theta(\eta)|^2\,\Phi_{\Ccal}(f)(\eta)^2\,d\eta,
\]
e.g., take $\Theta = {\mathbb 1}_E$. Thus,
\begin{equation}
\label{eq:p}
    p = \int_{H^{\perp}}\,|\Theta(\eta)|^2\,\big( A\,\Phi_{\Ccal}(f)(\eta) - \Phi_{\Ccal}(f)(\eta)^2 \big) \,d\eta > 0.
\end{equation}
We shall find a trigonometric polynomial $\Theta_F$ on $H^{\perp}$ so that the strict inequality \eqref{eq:p}
is valid for $\Theta$ replaced by $ \Theta_F$. Thus, \eqref{eq:ineq12} fails in this case, and we obtain the
contradiction sought in part {\it i.}

Note that if $\Phi_{\Ccal}(f) \in L^2(H^{\perp}) \setminus L^{\infty}(H^{\perp})$, then
the strict inequality 
\eqref{eq:p} is still valid for $\Theta \in L^{\infty}(H^{\perp})$; and the plan to choose
$\Theta_F$ does not a priori require $\Phi_{\Ccal}(f) \in L^{\infty}(H^{\perp})$.

{\it iii.} Still only assuming $\Phi_{\Ccal}(f) \in L^{2}(H^{\perp})$
(and this is the case by the frame hypothesis and Proposition \ref{prop:bessel}),
we have for any $\Psi \in  L^{\infty}(H^{\perp})$ that
\begin{align}
\int_{H^{\perp}}\,|\Psi(\eta)|^2\, &
\big( A\,\Phi_{\Ccal}(f)(\eta) - \Phi_{\Ccal}(f)(\eta)^2 \big) \,d\eta 
\notag \\
\label{eq:decomp}
   &= \int_{H^{\perp}\setminus E}\,|\Psi(\eta)|^2\,\big( A\,\Phi_{\Ccal}(f)(\eta) - \Phi_{\Ccal}(f)(\eta)^2 \big) \,d\eta 
\\
& \phantom{blahbl} + \int_{E}\,|\Psi(\eta) -\Theta(\eta) + \Theta(\eta)|^2\,\Phi_{\Ccal}(f)(\eta)
    \big(A - \Phi_{\Ccal}(f)(\eta) \big) \,d\eta.
\notag
\end{align}
Since $A - \Phi_{\Ccal}(f) > 0$ on $E$ and $\Phi_{\Ccal}(f) > 0 \; {\rm a.e.}$ on
$H^{\perp} \setminus N$, we see that 
\[
   \Phi_{\Ccal}(f)\, \big(A - \Phi_{\Ccal}(f)\big) > 0\; {\rm a.e.} \, {\rm on} \, E; 
\]
and we consider $ \Phi_{\Ccal}(f)\, \big(A - \Phi_{\Ccal}(f)\big)$ as the weight for
a weighted $L^2$ Hilbert space on $E$. Thus, with $\Theta,\,\Psi \in L^{\infty}(H^{\perp})$,
we can obtain the estimate
\begin{align}
         \bigg(\int_{E}\,|\Psi(\eta) & -\Theta(\eta) + \Theta(\eta)|^2\,\Phi_{\Ccal}(f)(\eta)
    \big(A - \Phi_{\Ccal}(f)(\eta) \big) \,d\eta \bigg)^{1/2}
\notag \\
\label{eq:weighted}
    &\geq \bigg(\int_{E}\,|\Psi(\eta) -\Theta(\eta)|^2\,\Phi_{\Ccal}(f)(\eta)
    \big(A - \Phi_{\Ccal}(f)(\eta) \big) \,d\eta \bigg)^{1/2}
\\
     & \phantom{blahbl}  - \bigg(\int_{E}\,|\Theta(\eta)|^2\,\Phi_{\Ccal}(f)(\eta)
    \big(A - \Phi_{\Ccal}(f)(\eta) \big) \,d\eta \bigg)^{1/2}.
\notag
\end{align}
Combining \eqref{eq:decomp} and \eqref{eq:weighted}, we have
\begin{align}
\int_{H^{\perp}}\,|\Psi(\eta)|^2\, &
\big( A\,\Phi_{\Ccal}(f)(\eta) - \Phi_{\Ccal}(f)(\eta)^2 \big) \,d\eta
\notag \\
\label{eq:pp1/2}
&\geq \, p - 2p^{1/2}\, \bigg(\int_{E}\,|\Psi(\eta) -\Theta(\eta)|^2\,\Phi_{\Ccal}(f)(\eta)
    \big(A - \Phi_{\Ccal}(f)(\eta) \big) \,d\eta \bigg)^{1/2}.
\end{align}

{\it iv.} As we saw 
in Remark \ref{rem:ortho}, the trigonometric polynomials are dense in $L^2(H^{\perp})$.
We shall show how to choose such a trigonometric polynomial $\Psi$ so that
\begin{equation}
\label{eq:final}
    \int_{H^{\perp}}\,|\Psi(\eta)|^2\,\big( A\,\Phi_{\Ccal}(f)(\eta) - \Phi_{\Ccal}(f)(\eta)^2 \big) \,d\eta
    \geq \frac{p}{2} >0.
\end{equation}
This is the strict inequality promised in part {\it ii.} yielding the 
desired contradiction set out in part {\it i.}

We begin by using our assumption that $\Phi_{\Ccal}(f) < A$ on $E \subseteq H^{\perp} \setminus N$
in the following way. Let $t = \Phi_{\Ccal}(f)$ so that $t \in (0,A)$, and consider $r(t) =
t^2 - At + A^2/4$ on $[0,A]$. Then by calculus $r(t) > 0$ on $[0,A]$ except at $A/2$ where it
vanishes. Consequently, $|A\,\Phi_{\Ccal}(f) - \Phi_{\Ccal}(f)^2| \leq A^2/4$ on $E$,
and so
\[
   2\,p^{1/2}\, \bigg(\int_{E}\,|\Psi(\eta) -\Theta(\eta)|^2\,\Phi_{\Ccal}(f)(\eta)
    \big(A - \Phi_{\Ccal}(f)(\eta) \big) \,d\eta \bigg)^{1/2} 
    \leq Ap^{1/2} \norm{\Psi - \Theta}_{L^2(H^{\perp})}
\]
for all $\Psi \in L^{\infty}(H^{\perp})$. Now choose $\Psi \in L^{\infty}(H^{\perp})$
to be a trigonometric polynomial for which $\norm{\Psi - \Theta}_{L^2(H^{\perp})}
\leq p^{1/2}/(2A)$.
This combined with \eqref{eq:pp1/2} gives \eqref{eq:final}.

The choice of polynomial is easier to achieve directly from part {\it iii.}
in the explicit case that $\Theta = {\mathbb 1}_E$.

\end{proof}

%%%%%%%%%%%%%%%%%%%%%%
%%%%%%%%%%%%%%%%%%%%%%

\section{Examples}
\label{sec:ex}

As number theoretic background, recall the classification theorem of 
non-discrete locally compact fields K.
{\it If ${\rm char}(K) =0$, then $K$ is $\RR, \, \CC$, or a finite extension of $\Qp$; 
and if ${\rm char}(K) = p >0$, then $K$ is ultrametric and isomorphic to the
field of formal power series over a finite field}, see \cite{RamVal1999}, Section 4.2.
Except for $\RR$ and $\CC$, these fields $K_v$ are non-Archimedean. This infers 
that the ring of integers $\Ocal_v \subseteq K_v$ is a compact open subgroup under addition,
where $v$ designates the discrete absolute value $|\cdot|_v$ used to define the topology on $K_v$;
e.g., see Example \ref{ex:QpZp} and \cite{serr1979}, \cite{RamVal1999}, \cite{robe2000}. 
We refer to each $K_v$ as a locally compact local field or simply \emph{local field}.

\begin{example}
\label{ex:twoH}
Let $G=K_v$ be a finite extension of the field $\Qp$, under addition, 
where $p\geq 3$ is an odd prime,
and let $H=\Ocal_v$ be the ring of integers of $K_v$. In particular,
we could have $G  = \Qp$ and $H=\Zp$. All such fields have the property
that $\widehat{G}$ is (non-canonically) isomorphic to $G$, and that
for each $a\in G$, we have $p^n a \in H$ for some $n\geq 0$.

Fix $a\in G$ with $a\not\in H$, and define $f\in L^2(G)$ by
\[ 
      f = {\mathbb 1}_H + {\mathbb 1}_{a+H}. 
\]
According to Example~\ref{ex:transchi}, for each $m\in\ZZ$, we have
\[ \tau_{[ma],\Ccal}f = (a,\sigma_0)^m\big( {\mathbb 1}_{ma+H} + {\mathbb 1}_{(m+1)a+H} \big). \]
Thus, ${\mathbb 1}_{a+H} + {\mathbb 1}_{2a+H}\in V_{\Ccal,f}$, and hence
${\mathbb 1}_H - {\mathbb 1}_{2a+H}\in V_{\Ccal,f}$.
Since the order of $a$ in $G/H$ is odd, summing translates of 
${\mathbb 1}_H - {\mathbb 1}_{2a+H}$ gives ${\mathbb 1}_H-{\mathbb 1}_{a+H}\in V_{\Ccal,f}$,
and hence ${\mathbb 1}_H\in V_{\Ccal,f}$. Therefore,
\[ V_{\Ccal,f} = \{g\in L^2(G) : g \text{ is constant on cosets } x+H \} \]
is generated by functions of the form ${\mathbb 1}_{x+H}$.

A simple computation shows
\[ \widehat{f}(\gamma) = \big( 1 + \overline{(a,\gamma)} \big) {\mathbb 1}_{H^{\perp}} (\gamma), \]
and, hence, for $\eta\in H^{\perp}$, we have
\begin{equation}
\label{eq:PhitwoH}
\Phi_{\Ccal}(f)(\eta) = \sum_{\sigma\in\Ccal} \big| \hat{f}(\eta + \sigma) \big|^2
=\big| 1 + \overline{(a,\eta+\sigma_0)} \big|^2 .
\end{equation}
The order of $[a]\in G/H$ is $p^n$ for some $n\geq 1$, and so
$\overline{(a,\eta+\sigma_0)}$ can take the value of any $p^n$-root of unity.
Substituting all such values into equation~\eqref{eq:PhitwoH}, we obtain
\[ 2  - 2\cos(\pi/p^n) \leq \Phi_{\Ccal}(f) \leq 4,\]
since the $p^n$-root of unity closest to $-1$ is $-e^{\pi i/p^n}$,
and the furthest is $1$.
Thus, Theorem~\ref{thm:char} allows us to conclude that $\{\tau_{[x],\Ccal} f : [x]\in G/H \}$
is a frame for $V_{\Ccal,f}$ with frame constants
$A=2  - 2\cos(\pi/p^n)$ and $B=4$.
\end{example}

\begin{example}
\label{ex:twoH2}
Let $G=\QQ_2$ and $H=\ZZ_2$, or, more generally, let $G=K_v$ be a finite extension of $\QQ_2$,
with $H=\Ocal_v$ the associated ring of integers. As in Example~\ref{ex:twoH}, we have that
$G$ and $\widehat{G}$ are isomorphic; and for each $a\in G$, we have $2^n a \in H$ for some $n\geq 0$.

Also, as in Example~\ref{ex:twoH}, fix $a\in G$ with $a\not\in H$, and define
$f={\mathbb 1}_H + {\mathbb 1}_{a+H} \in L^2(G)$. Once again, we have
\[ \tau_{[ma],\Ccal}f = (a,\sigma_0)^m\big( {\mathbb 1}_{ma+H} + {\mathbb 1}_{(m+1)a+H} \big)
\quad \text{for all } m\in\ZZ . \]
This time, however, since the order of $[a] \in G/H$ is even,
${\mathbb 1}_H - {\mathbb 1}_{a+H}$ does \emph{not} belong to $V_{\Ccal,f}$,
and, in fact,
\[ V_{\Ccal,f} = \{g\in L^2(G) : g \text{ is constant on sets of the form } (x+H) \cup (x+a+H) \} \]
is generated by functions of the form ${\mathbb 1}_{x+H}+{\mathbb 1}_{x+a+H}$.

By the same reasoning as in Example~\ref{ex:twoH}, $\Phi_{\Ccal}(f)$ is
given by the formula of equation~\eqref{eq:PhitwoH}. On the other hand, since
the order of $[a] \in G/H$ is $2^n$ for some $n\geq 1$, there is a set $N\subseteq H^{\perp}$
of positive measure where $(a,\eta+\sigma_0)=-1$, and hence where $\Phi_{\Ccal}(f)(\eta)=0$.
For $\eta\in H^{\perp}\setminus N$, however,
$(a,\eta+\sigma_0)$ is a $2^n$-roots of unity other than $-1$.
The closest such root to $-1$ is $-e^{2\pi i / 2^n}$, and the furthest is $1$.
Thus, we have
\[ 2  - 2\cos(\pi/2^{n-1}) \leq \Phi_{\Ccal}(f) \leq 4.\]
By Theorem~\ref{thm:char}, then, $\{\tau_{[x],\Ccal} f : [x]\in G/H \}$
is a frame for $V_{\Ccal,f}$ with frame constants
$A=2  - 2\cos(\pi/2^{n-1})$ and $B=4$.
\end{example}

\begin{example}
\label{ex:cH}
Let $G=K_v$ be a locally compact local field with ring of integers $H=\Ocal_v$,
and let $c\in H$ have the property $0<|c|_v < 1$, so that $cH$ is a compact open subgroup of $H$
of some index $n\geq 2$. (For instance, with $G=\Qp$, $H=\Zp$, and $c=p^m$,
the index of $p^m \Zp$ in $\Zp$ is $p^m$.)
Define $f=\sqrt{n} {\mathbb 1}_{cH}\in L^2(G)$. Then
\begin{equation}
\label{eq:cHfhat}
\widehat{f}(\gamma) = \sqrt{n}\int_{cH} \overline{(x,\gamma)} \, dx
= \frac{1}{\sqrt{n}} \int_{H} \overline{(cx,\gamma)} \, dx
= \frac{1}{\sqrt{n}} \int_{H} \overline{(x,c\gamma)} \, dx
= \frac{1}{\sqrt{n}} {\mathbb 1}_{c^{-1} H^{\perp}}(\gamma) .
\end{equation}
We also have $[c^{-1} H^{\perp} : H^{\perp}]=n$, and hence we can write
$\Ccal\cap c^{-1} H^{\perp} = \{\sigma_0, \sigma_1,\ldots,\sigma_{n-1}\}$.
A simple computation shows that 
\[ 
      \forall b \in G, \quad \tau_{[b],\Ccal}f(x) = \frac{1}{\sqrt{n}} \bigg( \sum_{i=0}^{n-1} 
      (x,\sigma_i) \bigg) {\mathbb 1}_{b+H}(x), 
\]
which is constant on all sets of the form $x+cH$. It is not obvious from this formula
that the translates of $f$ form a tight frame, but they do. In fact,
by equation~\eqref{eq:cHfhat}, we have
\[ 
\Phi_{\Ccal}(f)(\eta) = \sum_{\sigma\in\Ccal} \big| \hat{f}(\eta + \sigma) \big|^2
= \sum_{i=0}^{n-1} \Big( \frac{1}{\sqrt{n}} \Big)^2 = 1. 
\]
Then, by Theorem~\ref{thm:char}, we conclude that $\{\tau_{[x],\Ccal} f : [x]\in G/H \}$
is a FUNTF for $V_{\Ccal,f}$.
\end{example}

\begin{example}
\label{ex:cH2}
With $G$, $H$, $c$, and $n\geq 2$ as in Example~\ref{ex:cH},
define $f={\mathbb 1}_{c^{-1}H}\in L^2(G)$. Then,
\[ 
\widehat{f}(\gamma) = \int_{c^{-1}H} \overline{(x,\gamma)} \, dx
=n \int_{H} \overline{(c^{-1}x,\gamma)} \, dx
= n \int_{H} \overline{(x,c^{-1}\gamma)} \, dx
= n {\mathbb 1}_{c H^{\perp}}(\gamma); 
\]
and, hence, for any $b\in G$, we have
\[ \widehat{\tau_{[b],\Ccal}f}(\gamma)
= n \, \overline{(b,\eta_{\gamma})} {\mathbb 1}_{c H^{\perp}}(\gamma)
= n \, \overline{(b,\gamma - \sigma_0)} {\mathbb 1}_{c H^{\perp}}(\gamma),
= n \, (b,\sigma_0) \overline{(b,\gamma)} {\mathbb 1}_{c H^{\perp}}(\gamma), \]
since any $\gamma\in c H^{\perp}\subseteq H^{\perp}$ has $\sigma_\gamma=\sigma_0$.
Thus,
\begin{align*}
\tau_{[b],\Ccal}f(x)
&= n (b,\sigma_0) \int_{cH^\perp} (x-b,\gamma) \, d\gamma
= (b,\sigma_0) \int_{H^\perp} (x-b,c\gamma) \, d\gamma
\\
&= (b,\sigma_0) \int_{H^\perp} \big( c(x-b),\gamma\big) \, d\gamma
= (b,\sigma_0) {\mathbb 1}_{H}\big( c(x-b) \big)
= (b,\sigma_0) {\mathbb 1}_{b+c^{-1}H}(x).
\end{align*}
Therefore,
\[ V_{\Ccal,f} = \{ g\in L^2(G) : g \text{ is constant on cosets } x+ c^{-1} H \}, \]
and
\[ \Phi_{\Ccal}(f)(\eta) = \sum_{\sigma\in\Ccal} \big| \hat{f}(\eta + \sigma) \big|^2
= {\mathbb 1}_{cH^{\perp}}(\eta+\sigma_0)
= {\mathbb 1}_{-\sigma_0 + cH^{\perp}}(\eta). \]
Note that $\Phi_{\Ccal}(f)$ is zero on the set $N=H^{\perp}\setminus (-\sigma_0 + cH^{\perp})$,
which has measure $(n-1)/n$. However, $\Phi_{\Ccal}(f)$ is $1$ on $H^{\perp}\setminus N$,
and so $\{\tau_{[x],\Ccal} f : [x]\in G/H \}$ is a FUNTF for $V_{\Ccal,f}$
by Theorem~\ref{thm:char}.
\end{example}

%%%%%%%%%%%%%%%%%%%%%%%%%%%%%%%%%%%

%\bibliographystyle{abbrv}
\bibliographystyle{amsplain}

\bibliography{2018-03-05JBbib}

\end{document}